\newtheorem{theorem}{Theorem}[section]
\newtheorem{definition}[theorem]{Definition}
\newcommand {\bit}{\begin{itemize} \item}\newcommand {\eit}{\end{itemize}}
\newcommand {\ben}{\begin{enumerate} \item}\newcommand {\een}{\end{enumerate}}
\newcommand {\bena}{\begin{enumerate}\renewcommand{\labelenumi}{\alph{enumi}.}\item}
\newcommand {\beqn}{\begin{equation}}\newcommand {\eeqn}{\end{equation}}
\newcommand {\beqan}{\begin{eqnarray}}\newcommand {\eeqan}{\end{eqnarray}}
\newcommand {\beqa}{\begin{eqnarray*}}\newcommand {\eeqa}{\end{eqnarray*}}
\newcommand {\barr}{\begin{array}}\newcommand {\earr}{\end{array}}
\newcommand {\bat}{\begin{tabular}}\newcommand {\eat}{\end{tabular}}
\newcommand{\R}{\mathbb{R}}
\newcommand{\Z}{\mathbb{Z}}
\renewcommand{\S}{\mathcal{S}}
\newcommand{\coefx}{\textup{coef}}
\newcommand{\conex}{\textup{cone}}
\newcommand{\coef}[1]{\coefx (#1)}
\newcommand{\cone}[1]{\conex(#1)}
\newcommand{\U}{\mathcal{U}}
\begin{document}

\title{Optimization over Structured Subsets of\\ Positive Semidefinite Matrices via Column Generation}
\author{
Amir Ali Ahmadi\thanks{Amir Ali Ahmadi and Georgina Hall are partially supported by the Young Investigator Award of the AFOSR and the CAREER Award of the NSF.} \\ Princeton, ORFE \\ \small a\_a\_a@princeton.edu \and  
Sanjeeb Dash\\ IBM Research \\ \small sanjeebd@us.ibm.com \and Georgina Hall\footnotemark[1] \\ Princeton, ORFE \\ \small gh4@princeton.edu}


\maketitle

\begin{abstract}
\noindent

We develop algorithms to construct inner approximations of the cone of positive semidefinite matrices via linear programming and second order cone programming. Starting with an initial linear algebraic approximation suggested recently by Ahmadi and Majumdar, we describe an iterative process through which our approximation is improved at every step. This is done using ideas from column generation in large-scale linear programming. We then apply these techniques to approximate the sum of squares cone in a nonconvex polynomial optimization setting, and the copositive cone for a discrete optimization problem.
\end{abstract}


\section{Introduction}

Semidefinite programming is a powerful tool in optimization that is used in many different contexts, perhaps most notably to obtain strong bounds on discrete optimization problems or nonconvex polynomial programs. 
One difficulty in applying semidefinite programming is that state-of-the-art general-purpose solvers often cannot solve very large instances reliably and in a reasonable amount of time. As a result, at relatively large scales, one has to resort either to specialized solution techniques and algorithms that employ problem structure, or to easier optimization problems that lead to weaker bounds.
We will focus on the latter approach in this paper.


At a high level, our goal is to not solve semidefinite programs {\gh (SDPs)} to optimality, but rather replace them with cheaper conic relaxations---\emph{linear and second order cone relaxations} to be precise---that return useful bounds {\gh quickly}. Throughout the paper, we will aim to find lower bounds (for minimization problems); i.e., bounds that certify the distance of a candidate solution to optimality. {\gh Fast, good-quality lower bounds are especially important in the context of branch-and-bound schemes, where one needs to strike a delicate balance between the time spent on bounding and the time spent on branching, in order to keep the overall solution time low.} {\gh Currently, in commercial integer programming solvers, almost all lower bounding approaches using branch-and-bound schemes exclusively produce linear inequalities.} 
Even though semidefinite cuts are known to be stronger, they are often too expensive to be used even {\gh at the root node of a branch-and-bound tree.} 
%
Because of this, {\gh many} high-performance solvers, e.g., {\gh IBM ILOG CPLEX ~\cite{cplex} and Gurobi \cite{gurobi}}, do not even provide an SDP solver and instead solely work with LP and SOCP relaxations. Our goal in this paper is to offer some tools that exploit the power of SDP-based cuts, while staying entirely in the realm of LP and SOCP. We apply these tools to classical problems in both nonconvex polynomial optimization and discrete optimization. 



Techniques that provide lower bounds on minimization problems are precisely those that certify nonnegativity of objective functions on feasible sets. To see this, note that a scalar $\gamma$ is a lower bound on {\gh the minimum value of } a function $f:\mathbb{R}^n\rightarrow\mathbb{R}$ on a set $K\subseteq\mathbb{R}^n,$ if and only if $f(x)-\gamma\geq 0$ for all $x\in K$. As most discrete optimization problems (including {\gh those} in the complexity class NP) can be written as polynomial optimization problems, {\gh the problem of certifying nonnegativity of polynomial functions,} either globally or on basic semialgebraic sets, {\gh is a fundamental one}. A polynomial $p(x)\mathrel{\mathop:}=p(x_1,\ldots,x_n)$ is said to be \emph{nonnegative}, if $p(x)\geq0$ for all $x\in\mathbb{R}^n$. Unfortunately, even in this unconstrained setting, the problem of testing nonnegativity of a polynomial $p$ is NP-hard {\gh even} when its degree equals four. This is an immediate corollary of the fact that checking if a symmetric matrix $M$ is copositive---i.e., if $x^TMx\geq 0, \forall x\geq 0$---is NP-hard.\footnote{Weak NP-hardness of testing matrix copositivity is originally proven by Murty and Kabadi~\cite{nonnegativity_NP_hard}; its strong NP-hardness is apparent from the work of de Klerk and Pasechnik~\cite{dp}.} Indeed, $M$ is copositive if and only if the homogeneous quartic polynomial $p(x)=\sum_{i,j}M_{ij}x_i^2x_j^2$ is nonnegative.



Despite this computational complexity barrier, there has been great success in using sum of squares (SOS) programming~\cite{sdprelax},~\cite{lasserre_moment},~\cite{NesterovSquared} to obtain certificates of nonnegativity of polynomials in practical settings. It is known from Artin's solution~\cite{artin1927} to Hilbert's 17th problem that a polynomial $p(x)$ is nonnegative if and only if
\begin{equation}\label{eq:sos} p(x) = \frac{\sum_{i=1}^t q_i^2(x)}{\sum_{i=1}^r g_i^2(x)} \Leftrightarrow (\sum_{i=1}^r g_i^2(x))p(x) = \sum_{i=1}^t q_i^2(x)
\end{equation} for some polynomials $q_1, \ldots, q_t, g_1,\ldots,g_r$.
When $p$ is a quadratic polynomial, then the polynomials $g_i$ are not needed and the polynomials $ q_i$ can be assumed to be linear functions. In this case, by
writing $p(x)$ as
\[ p(x) = \left(\begin{array}{c} 1 \\ x \end{array}\right)^T Q \left(\begin{array}{c} 1 \\ x \end{array}\right), \]
where $Q$ is an $(n+1) \times (n+1)$ symmetric matrix, checking nonnegativity of $p(x)$
reduces to checking the nonnegativity of the eigenvalues of $Q$; i.e., checking if $Q$ is positive semidefinite.

More generally, if the degrees of $q_i$ and $ g_i$ are fixed in (\ref{eq:sos}), then checking for a representation of $p$ of the form in (\ref{eq:sos}) reduces to solving an SDP, whose size depends on the dimension of $x$, and the degrees of $p, q_i$ and $ g_i$ \cite{sdprelax}.
This insight has led to significant progress in certifying nonnegativity of polynomials arising in {\gh many} areas. In practice, the ``first level'' of the SOS hierarchy is often the one used, where the polynomials $g_i$ are left out and one simply checks if $p$ is a sum of squares of other polynomials. In this case already, because of the numerical difficulty of solving large SDPs, the polynomials that can be certified to be nonnegative usually do not have very high degrees or very many variables. For example, finding a sum of squares certificate that a given quartic polynomial over $n$ variables is nonnegative requires solving an SDP involving roughly $O(n^4)$ constraints and a positive semidefinite matrix variable of size $O(n^2) \times O(n^2)$. {\gh Even for a handful of or a dozen variables,} the underlying semidefinite constraints prove to be expensive. Indeed, in the absence of additional structure, most examples in the literature have less than 10 variables.

Recently other systematic approaches to certifying nonnegativity of polynomials have been proposed which lead to less expensive optimization problems than semidefinite programming problems. In particular, Ahmadi and Majumdar~\cite{isos_journal},~\cite{dsos_ciss14} introduce ``DSOS and SDSOS'' optimization techniques, which replace semidefinite programs arising in the nonnegativity certification problem by linear programs and second-order cone programs. Instead of optimizing over the cone of sum of squares polynomials, the authors optimize over two subsets which they call ``diagonally dominant sum of squares'' and ``scaled diagonally dominant sum of squares'' polynomials (see Section~\ref{subsec:dsos.sdsos} for formal definitions). In the language of semidefinite programming, this translates to solving optimization problems over the cone of diagonally dominant matrices and scaled diagonally dominant matrices. These can be done by LP and SOCP respectively. The authors have had notable success with these techniques in different applications. For instance, they are able to run these relaxations for polynomial optimization problems of degree 4 in 70 variables in the order of a few minutes. They have also used their techniques to push the size limits of some SOS problems in controls; examples include stabilizing a model of a humanoid robot with 30 state variables and 14 control inputs~\cite{dsos_cdc14}, or exploring the real-time applications of SOS techniques in problems such as collision-free autonomous motion planning~\cite{OR_isos_OptLetters}.

Motivated by these results, our goal in this paper is to start with DSOS and SDSOS techniques and improve on them. By exploiting {\gh ideas} from column generation in {\ghtwo large-scale linear programming}, and by appropriately interpreting the DSOS and SDSOS constraints, we produce several iterative LP and SOCP-based algorithms that improve the quality of the bounds obtained from the DSOS and SDSOS relaxations. Geometrically, this amounts to optimizing over structured subsets of sum of squares polynomials that are larger than the sets of diagonally dominant or scaled diagonally dominant sum of squares polynomials. For semidefinite programming, this is equivalent to optimizing over structured subsets of the cone of positive semidefinite matrices. An important distinction to make between the DSOS/SDSOS/SOS approaches and our approach, is that our approximations iteratively get larger in the direction of the {\gh given objective function,} unlike the DSOS, SDSOS, and SOS approaches which all try to inner approximate the set of nonnegative polynomials \emph{irrespective} of any particular direction.

{\ghtwo In related literature, Krishnan and Mitchell use linear programming techniques to approximately solve SDPs by taking a semi-infinite LP representation of the SDP and applying column generation \cite{krishnan2006semidefinite}. In addition, Kim and Kojima solve second order cone relaxations of SDPs which are closely related to the dual of an SDSOS program in the case of quadratic programming \cite{kim2003exact}; see Section \ref{sec:cg.overview} for further discussion of these two papers.}
	
The organization of the rest of the paper is as follows.
In the next section, we review relevant notation, and discuss the prior literature on {\gh DSOS and SDSOS programming.}
In Section~\ref{sec:cg.overview}, we give a high-level overview of our column generation approaches in the context of a general SDP.
In Section~\ref{sec:poly.opt}, we describe an application of our ideas to nonconvex polynomial optimization and {\gh present} computational experiments with certain column generation implementations.
{\ghtwo In Section~\ref{sec:stable.set}, we apply our column generation approach to approximate a copositive program arising from a specific discrete optimization application (namely the stable set problem).}
All the work in these sections can be viewed as providing techniques to optimize over subsets of positive semidefinite matrices.
We then conclude in Section~\ref{sec:future} with some future directions, and discuss ideas for column generation which allow one to go beyond subsets of positive semidefinite matrices in the case of polynomial {\ghtwo and copositive } optimization.



\section{Preliminaries}\label{sec:prelims}
Let us first introduce some notation on matrices. We denote the set of real symmetric $n\times n$ matrices by $S_n$. Given two matrices $A$ and $B$ in $S_n$, we denote their matrix inner product by $A\cdot B := \sum_{i,j}A_{ij}B_{ij} = \mbox{ Trace}(AB)$. The set of symmetric matrices with nonnegative entries is denoted by $N_n$. A symmetric matrix $A$ is \emph{positive semidefinite} (psd) if $x^TAx\geq 0$ for all $x\in\mathbb{R}^n$; this will be denoted by the standard notation $A\succeq 0$, and our notation for the set of $n\times n$ psd matrices is $P_n$. A matrix $A$ is \emph{copositive} if $x^TAx\geq 0$ for all $x\geq 0$. The set of copositive matrices is denoted by $C_n$. All three sets $N_n,P_n,C_n$ are convex cones and we have the obvious inclusion $N_n+P_n\subseteq C_n$. This inclusion is strict if $n\geq 5$~\cite{burer2009difference},~\cite{burer2012copositive}. For a cone $\mathcal{K}$ of matrices in $S_n$, we define its dual cone $\mathcal{K}^*$ as $\{Y \in \S_n: Y\cdot X \geq 0, \ \forall X \in \mathcal{K}\}$.

For a vector variable $x \in \R^n$ and a vector $q \in \Z^n_+$, let a monomial in $x$ be denoted as $x^q\mathrel{\mathop:}= \Pi_{i=1}^n x_i^{q_i}$, and let its degree be $\sum_{i=1}^n q_i$.
A polynomial is said to be \emph{homogeneous} or a \emph{form} if all of its monomials have the same degree. A form $p(x)$ in $n$ variables is nonnegative if $p(x)\geq 0$ for all $x\in\mathbb{R}^n$, or equivalently for all $x$ on the unit sphere in $\mathbb{R}^n$. The set of {\ghtwo \emph{nonnegative}} (or positive semidefinite) forms in $n$ variables and degree $d$ is denoted by $PSD_{n,d}$. A form $p(x)$ is a \emph{sum of squares} (sos) if it can be written as $p(x)=\sum_{i=1}^r q_i^2(x)$ for some forms $q_1,\ldots,q_r$. The set of sos forms in $n$ variables and degree $d$ is {\gh a cone} denoted by $SOS_{n,d}$. We have the obvious inclusion $SOS_{n,d}\subseteq PSD_{n,d}$, which is strict unless $d=2$, or $n=2$, or $(n,d)=(3,4)$~\cite{Hilbert_1888}. Let $z(x,d)$ be the vector of all monomials of degree exactly $d$; it is well known that a form $p$ of degree $2d$ is sos if and only if it can be written as {\gh$p(x)=z^T(x,d)Qz(x,d)$,} for some psd matrix $Q$~\cite{sdprelax},~\cite{PhD:Parrilo}. The size of the matrix $Q$, which is often called the \emph{Gram matrix}, is ${n+d-1 \choose d} \times {n+d-1 \choose d}$. At the price of imposing a semidefinite constraint of this size, one obtains the very useful ability to search and optimize over the convex cone of sos forms via semidefinite programming.

%

\subsection{DSOS and SDSOS optimization}
\label{subsec:dsos.sdsos}

In order to alleviate the problem of scalability posed by the SDPs arising from sum of squares programs, Ahmadi and Majumdar~\cite{isos_journal},~\cite{dsos_ciss14}\footnote{The work in~\cite{isos_journal} is currently in preparation for submission; the one in~\cite{dsos_ciss14} is a shorter conference version of~\cite{isos_journal} which has already appeared. The presentation of the current paper is meant to be self-contained.} recently introduced similar-purpose LP and SOCP-based optimization problems that they refer to as \emph{DSOS and SDSOS programs}. Since we will be building on these concepts, we briefly review their relevant aspects to make our paper self-contained.


The idea in~\cite{isos_journal},~\cite{dsos_ciss14}  is to replace the condition that the Gram matrix $Q$ be positive semidefinite with stronger but {\gh cheaper} conditions in the hope of obtaining more efficient inner approximations to the cone $SOS_{n,d}$. Two such conditions come from the concepts of \emph{diagonally dominant} and \emph{scaled diagonally dominant} matrices in linear algebra. We recall these definitions below.

\begin{definition}\label{def:dd.sdd}
A symmetric matrix {\gh $A=(a_{ij})$} is \emph{diagonally dominant} (dd) if $a_{ii} \geq \sum_{j \neq i} |a_{ij}|$ for all $i$. We say that $A$ is \emph{scaled diagonally dominant} (sdd) if there exists a diagonal matrix $D$, with positive diagonal entries, such that $DAD$ is diagonally dominant.
\end{definition}

We refer to the set of $n \times n$ dd (resp. sdd) matrices as $DD_n$ (resp. $SDD_n$). The following inclusions are a consequence of Gershgorin's circle theorem:
$$DD_n\subseteq SDD_n\subseteq P_n.$$

We now use these matrices to introduce the cones of ``dsos'' and ``sdsos'' forms and some of their generalizations, which all constitute special subsets of the cone of nonnegative forms. We remark that in the interest of brevity, we do not give the original definitions of dsos and sdsos polynomials as they appear in~\cite{isos_journal} (as sos polynomials of a particular structure), but rather an equivalent characterization of them that is more useful for our purposes. The equivalence is proven in~\cite{isos_journal}. 


 \begin{definition}[\cite{isos_journal,dsos_ciss14}] \label{def:dsos.sdsos.rdsos.rsdsos}
	Recall that $z(x,d)$ denotes the vector of all monomials of degree exactly $d$. A form $p(x)$ of degree $2d$ is said to be
	
	\begin{enumerate}[(i)]
		\item  \emph{diagonally-dominant-sum-of-squares} (dsos) if it admits a representation as\\ $p(x)=z^T(x,d)Qz(x,d)$, where $Q$ is a dd matrix,
		\item  \emph{scaled-diagonally-dominant-sum-of-squares} (sdsos) if it admits a representation as\\ $p(x)=z^T(x,d)Qz(x,d)$, where $Q$ is an sdd matrix,
		\item \emph{$r$-diagonally-dominant-sum-of-squares} ($r$-dsos) if there exists a positive integer $r$ such that \\$p(x) (\sum_{i=1}^n x_i^2)^r$ is dsos,
			\item \emph{$r$-scaled diagonally-dominant-sum-of-squares} ($r$-sdsos) if there exists a positive integer $r$ such that \\$p(x)  (\sum_{i=1}^n x_i^2)^r$ is sdsos.
	\end{enumerate}
\end{definition}
We denote the cone of forms in $n$ variables and degree $d$ that are dsos, sdsos, $r$-dsos, and $r$-sdsos by $DSOS_{n,d}$, $SDSOS_{n,d}$, $rDSOS_{n,d}$, and $rSDSOS_{n,d}$ respectively. The following inclusion relations are straightforward: $$DSOS_{n,d}\subseteq SDSOS_{n,d}\subseteq SOS_{n,d}\subseteq PSD_{n,d},$$
$$rDSOS_{n,d}\subseteq rSDSOS_{n,d}\subseteq PSD_{n,d}, \forall r.$$

The multiplier $(\sum_{i=1}^n x_i^2)^r$ should be thought of as a special denominator in the Artin-type representation in (\ref{eq:sos}). By appealing to some theorems of real algebraic geometry, it is shown in~\cite{isos_journal} that under some conditions, as the power $r$ increases, the sets $rDSOS_{n,d}$ (and hence $rSDSOS_{n,d}$) fill up the entire cone $PSD_{n,d}.$ We will mostly be concerned with the cones $DSOS_{n,d}$ and $SDSOS_{n,d}$, which correspond to the case where $r=0$. From the point of view of optimization, our interest in all of these algebraic notions stems from the following {\gh theorem.}

\begin{theorem}[\cite{isos_journal,dsos_ciss14}]	For any integer $r\geq 0$, the cone $rDSOS_{n,d}$ is polyhedral and the cone $rSDSOS_{n,d}$ has a second order cone representation. Moreover, for any fixed $d$ and $r$, {\gh one can optimize a linear function over $rDSOS_{n,d}$ (resp. $rSDSOS_{n,d}$) by solving a linear program (resp. second order cone program) of size polynomial in $n$.}
\end{theorem}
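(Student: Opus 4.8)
The plan is to reduce all three claims to two structural facts about the matrix cones $DD_N$ and $SDD_N$, where $N:=\binom{n+d+r-1}{d+r}$ is the length of the monomial vector $\tilde z:=z(x,d+r)$, together with the observation that the relations defining $rDSOS_{n,d}$ and $rSDSOS_{n,d}$ are linear in the coefficients of $p$.

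First I would record a lifted linear description of $DD_N$ and a second-order-cone description of $SDD_N$. For diagonal dominance the absolute values are linearized in the standard way: $A=(a_{ij})\in DD_N$ iff there exist auxiliary variables $b_{ij}$ $(i\ne j)$ with $b_{ij}\ge a_{ij}$, $b_{ij}\ge -a_{ij}$, and $a_{ii}\ge\sum_{j\ne i}b_{ij}$ for all $i$. This is a polyhedral lift with $O(N^2)$ variables and constraints, so $DD_N$ is polyhedral. For scaled diagonal dominance I would use the characterization that $A\in SDD_N$ if and only if $A=\sum_{i<j}M^{ij}$, where each $M^{ij}$ vanishes outside the $\{i,j\}$ principal submatrix and its $2\times 2$ block is psd. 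A $2\times 2$ constraint $\left(\begin{smallmatrix}\alpha&\beta\\\beta&\gamma\end{smallmatrix}\right)\succeq 0$ is exactly the rotated second-order cone constraint $\bigl\|(2\beta,\ \alpha-\gamma)\bigr\|\le\alpha+\gamma$, so this exhibits $SDD_N$, after projecting out the $\binom{N}{2}$ blocks $M^{ij}$, as the linear image of an SOC-representable set, hence SOC-representable, with $O(N^2)$ three-dimensional cones.

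Next I would pass from matrices to forms. In the basis $\tilde z$, the identity $p(x)\bigl(\sum_{i=1}^n x_i^2\bigr)^r=\tilde z^T\tilde Q\,\tilde z$ imposes linear equalities relating the coefficients of $p$ to the entries of $\tilde Q$: the coefficient of each monomial $x^\alpha$ of degree $2(d+r)$ equals the sum of those $\tilde Q_{ij}$ whose associated monomial product is $x^\alpha$, and the map $p\mapsto p\cdot(\sum_i x_i^2)^r$ is itself linear. By Definition~\ref{def:dsos.sdsos.rdsos.rsdsos}, $p\in rDSOS_{n,d}$ (resp. $rSDSOS_{n,d}$) iff there is a $\tilde Q\in DD_N$ (resp. $SDD_N$) satisfying these equalities. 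Hence $rDSOS_{n,d}$ is the image under a coordinate projection of the polyhedron carved out in $(p,\tilde Q,b)$-space, and a linear image of a polyhedron is a polyhedron; likewise $rSDSOS_{n,d}$ is the linear image of an SOC-representable set and is therefore SOC-representable.

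Finally, optimizing a linear function over $rDSOS_{n,d}$ (resp. $rSDSOS_{n,d}$) is the same as optimizing that function over the lifted polyhedron (resp. SOC-representable set) in the extended variables, i.e. an LP (resp. SOCP). For fixed $d$ and $r$ we have $N=\binom{n+d+r-1}{d+r}=O(n^{d+r})$, and $p\cdot(\sum_i x_i^2)^r$ has $O(n^{2(d+r)})$ coefficients, so the lift has $O(N^2)=O(n^{2(d+r)})$ variables, constraints, and cones---polynomial in $n$. The step I expect to be the crux is the characterization of $SDD_N$ used above. Its ``only if'' direction is direct: if $DAD$ is diagonally dominant then it decomposes into psd $2\times2$ blocks (writing each off-diagonal entry $b_{ij}$ as the block $\left(\begin{smallmatrix}|b_{ij}|&b_{ij}\\ b_{ij}&|b_{ij}|\end{smallmatrix}\right)$ and the dominant diagonal slack as psd $1\times1$ blocks), and congruence by the positive diagonal $D^{-1}$ preserves both the support and positive semidefiniteness; the ``if'' direction is the deeper factor-width-two inclusion. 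By contrast, the linearization of $DD_N$, the Gram-matrix equalities, and the ``linear image of a polyhedron / SOC-representable set'' arguments are all routine.
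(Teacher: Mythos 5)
Your proposal is correct and follows essentially the same route as the paper: the Gram-matrix identity $p(x)(\sum_i x_i^2)^r=\tilde z^T\tilde Q\tilde z$ giving linear equalities (you rightly use degree $d+r$ monomials, fixing a typo in the paper's statement), diagonal dominance linearized by auxiliary-variable inequalities for the LP part, and the decomposition of sdd matrices into $2\times 2$ psd blocks imposed via rotated quadratic cone constraints for the SOCP part, with size $O(n^{2(d+r)})$ polynomial in $n$ for fixed $d,r$. Like the paper, you defer the nontrivial direction of the sdd/factor-width-two characterization to the cited reference, so nothing essential is added or missing.
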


The ``LP part'' of this theorem is not hard to see. The equality $p(x)(\sum_{i=1}^n x_i^2)^r={\gh z^T(x,d)}Qz(x,d)$ gives rise to linear equality constraints between the coefficients of $p$ and the entries of the matrix $Q$ (whose size is polynomial in $n$ for fixed $d$ and $r$). The requirement of diagonal dominance on the matrix $Q$ can also be described by linear inequality constraints on $Q$. The ``SOCP part'' of the statement comes from the fact, shown in~\cite{isos_journal}, that a matrix $A$ is sdd if and only if it can be expressed as 	$$A = \sum_{i< j} M_{2 \times 2}^{ij},$$
where each $ M_{2 \times 2}^{ij}$ is an $n\times n$ symmetric matrix with zeros everywhere except for four entries $M_{ii}, M_{ij}, M_{ji}, M_{jj}$, which must make the $2\times 2$ matrix $\begin{bmatrix} M_{ii} & M_{ij}  \\ M_{ji} & M_{jj} \end{bmatrix}$ symmetric and positive semidefinite. These constraints are \emph{rotated quadratic cone} constraints and can be imposed using SOCP~\cite{socp_alizadeh_goldfarb},~\cite{socp_boyd}:
$$M_{ii}\geq 0, ~\Bigl\lvert\Bigl\lvert\begin{pmatrix}
2M_{ij}\\M_{ii}-M_{jj}
\end{pmatrix}\Bigl\lvert\Bigl\lvert \leq M_{ii}+M_{jj}.$$

We refer to optimization problems with a linear objective posed over the convex cones $DSOS_{n,d}$, $SDSOS_{n,d}$, and $SOS_{n,d}$ as DSOS programs, SDSOS programs, and SOS programs respectively. In general, quality of approximation decreases, while scalability increases, as we go from SOS to SDSOS to DSOS programs. Depending on the size of the application at hand, one may choose one approach over the other. \\
{\ghtwo In related work, Ben-Tal and Nemirovski \cite{ben2001polyhedral} and Vielma, Ahmed and Nemhauser \cite{vielma2010mixed} approximate SOCPs by LPs and produce approximation guarantees.}


\section{Column generation for inner approximation of positive semidefinite cones}\label{sec:cg.overview}


In this section, we describe a natural approach to apply techniques from the theory of column generation~\cite{barnhart1998branch},~\cite{desaulniers2006column} in large-scale {\ghprime optimization} 
to the problem of optimizing over nonnegative polynomials.  Here is the rough idea: We can think of all SOS/SDSOS/DSOS approaches as ways of proving that a polynomial is nonnegative by writing it as a nonnegative linear combination of certain ``atom'' polynomials that are already known to be nonnegative. For SOS, these atoms are all the squares (there are infinitely many). For DSOS, there is actually a finite number of atoms {\gh corresponding to the extreme rays of the cone of diagonally} dominant matrices (see Theorem~\ref{thm:dsos.corners} below). For SDSOS, once again we have infinitely many atoms, but with a specific structure which is amenable to an SOCP representation. Now the column generation idea is to start with a certain ``cheap'' subset of atoms (columns) and only add new ones---one or a limited number in each iteration---if they improve our desired objective function. This results in a sequence of monotonically improving bounds; we stop the column generation procedure when we are happy with the quality of the bound, or when we have consumed a predetermined budget on time.

In the LP case, after the addition of one or a few new atoms, one can obtain the new optimal solution from the previous solution in much less time than required to solve the new problem from scratch. However, as we show with some examples in this paper, even if one were to resolve the problems from scratch after each iteration (as we do for all of our SOCPs and some of our LPs), the overall procedure is still relatively fast. This is because in each iteration, with the introduction of a constant number $k$ of new atoms, the problem size essentially increases only by $k$ new variables and/or $k$ new constraints. This is in contrast to other types of hierarchies---such as the rDSOS and rSDSOS hierarchies of Definition~\ref{def:dsos.sdsos.rdsos.rsdsos}---that blow up in size by a factor that depends on the dimension in each iteration. 

In the next two subsections we make this general idea more precise. While our focus in this section is on column generation for general SDPs, the next two sections show how the techniques are useful for approximation of SOS programs for polynomial optimization (Section~\ref{sec:poly.opt}), and copositive programs for discrete optimization (Section~\ref{sec:stable.set}). 

\subsection{LP-based column generation}\label{subsec:LP.based.CG}
Consider a general SDP
\begin{equation}\label{eq:sdp}
\begin{aligned} 
\max_{y\in\mathbb{R}^m} &  \quad  b^Ty\\
\text{s.t. } & \quad C-\sum_{i=1}^m y_iA_i\succeq 0,
\end{aligned}
\end{equation} 
with $b\in\mathbb{R}^m, C,A_i\in S_n$ as input, and its dual

\begin{equation}\label{eq:dual.sdp}
\begin{aligned} 
\min_{X\in S_n} &  \quad  C\cdot X\\
\text{s.t. } & \quad A_i\cdot X=b_i, \ i=1,\ldots, m,\\
\  & \quad X\succeq 0.\\
\end{aligned}
\end{equation} 

Our goal is to inner approximate the feasible set of (\ref{eq:sdp}) by increasingly larger polyhedral sets. We consider LPs of the form

\begin{equation}\label{eq:LP.inner}
\begin{aligned} 
\max_{y,\alpha} &  \quad  b^Ty\\
\text{s.t. } & \quad C-\sum_{i=1}^m y_iA_i=\sum_{i=1}^t \alpha_i B_i,\\
\ & \quad \alpha_i\geq 0,\ i=1,\ldots,t.
\end{aligned}
\end{equation} 

Here, the matrices $B_1,\ldots, B_t\in P_n$ are some fixed set of positive semidefinite matrices (our psd ``atoms''). To expand our inner approximation, we will continually add to this list of matrices. This is done by considering the dual LP

\begin{equation}\label{eq:dual.of.LP.inner}
\begin{aligned} 
\min_{X\in S_n} &  \quad  C\cdot X\\
\text{s.t. } & \quad A_i\cdot X=b_i, \ i=1,\ldots, m,\\
\  & \quad X\cdot B_i \geq 0, \ i=1,\ldots, t,
\end{aligned}
\end{equation} 
which in fact gives a polyhedral outer approximation (i.e., relaxation) of the spectrahedral feasible set of the SDP in (\ref{eq:dual.sdp}). If the optimal solution $X^*$ of the LP in (\ref{eq:dual.of.LP.inner}) is already psd, then we are done and have found the optimal value of our SDP. If not, we can use the violation of positive semidefiniteness to extract one (or more) new psd atoms $B_{j}$. Adding such atoms to (\ref{eq:LP.inner}) is called {\em column generation}, and the problem of finding such atoms is called the {\em pricing subproblem}. (On the other hand, if one starts off with an LP of the form (\ref{eq:dual.of.LP.inner}) as an approximation of (\ref{eq:dual.sdp}), then the approach of adding inequalities to the LP iteratively that are violated by the current solution is called a {\em cutting plane} approach, and the associated problem of finding violated constraints is called the {\em separation subproblem}.)
The simplest idea for pricing is to look at the eigenvectors $v_j$ of $X^*$ that correspond to negative eigenvalues. From each of them, one can generate a rank-one psd atom $B_j=v_jv_j^T$, which can be added with a new variable (``column'') $\alpha_j$ to the primal LP in (\ref{eq:LP.inner}), and as a new constraint (``cut'') to the dual LP in (\ref{eq:dual.of.LP.inner}). The subproblem can then be defined as getting the most negative eigenvector, which is equivalent to minimizing the quadratic form $x^TX^*x$ over the unit sphere $\{x|\ ||x||=1\}$. Other possible strategies are discussed later in the paper.

This LP-based column generation idea is rather straightforward, but what does it have to do with DSOS optimization? The connection comes from the extreme-ray description of the cone of diagonally dominant matrices, which allows us to interpret a DSOS program as a particular and effective way of obtaining $n^2$ initial psd atoms.

Let $\mathcal{U}_{n,k}$ denote the set of vectors in $\mathbb{R}^n$ which have at most $k$ nonzero components, each equal to $\pm 1$, and define $U_{n,k} \subset S_n$ to be the set of matrices 
\[ U_{n,k} \mathrel{\mathop:}= \{uu^T : u\in\mathcal{U}_{n,k}\}. \]

For a finite set of matrices $T = \{T_1, \ldots, T_t\}$, let $$\cone{T} \mathrel{\mathop:}= \{\sum_{i=1}^t \alpha_i T_i : \alpha_1, \ldots, \alpha_t \geq 0\}.$$ 


\begin{theorem}[Barker and Carlson~\cite{dd_extreme_rays}]\label{thm:dsos.corners}
$DD_n = \cone{U_{n,2}}.$
\end{theorem}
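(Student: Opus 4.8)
The plan is to prove the two set inclusions separately. The inclusion $\cone{U_{n,2}} \subseteq DD_n$ is the routine direction, while $DD_n \subseteq \cone{U_{n,2}}$ carries the real content and is where the diagonal dominance hypothesis enters.

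First I would record that $DD_n$ is a convex cone: closure under nonnegative scaling is immediate, and closure under addition follows from the triangle inequality applied entrywise to the off-diagonal terms. For $\cone{U_{n,2}} \subseteq DD_n$ it then suffices to check that each generator $uu^T$ with $u \in \mathcal{U}_{n,2}$ lies in $DD_n$, since $\cone{U_{n,2}}$ consists of nonnegative combinations of these. The generators come in three shapes: $e_i e_i^T$, which is $1$ on the $(i,i)$ diagonal and zero elsewhere, and $(e_i \pm e_j)(e_i \pm e_j)^T$ for $i \neq j$, whose only nonzero pattern is a rank-one $2 \times 2$ block with diagonal entries $1$ and off-diagonal entries $\pm 1$. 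In every case the diagonal entry equals the absolute value of the off-diagonal entry in that row, so the defining inequality $a_{ii} \geq \sum_{j \neq i}|a_{ij}|$ holds (with equality in the $2 \times 2$ blocks).

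For the forward inclusion I would exhibit an explicit nonnegative decomposition of an arbitrary $A = (a_{ij}) \in DD_n$. The key observation is that each off-diagonal pair can be reproduced by a single sign-appropriate generator: take $a_{ij}\,(e_i+e_j)(e_i+e_j)^T$ when $a_{ij}\geq 0$ and $|a_{ij}|\,(e_i-e_j)(e_i-e_j)^T$ when $a_{ij}<0$. Either choice contributes exactly $a_{ij}$ to the off-diagonal positions $(i,j)$ and $(j,i)$, while simultaneously depositing $|a_{ij}|$ onto each of the diagonal positions $(i,i)$ and $(j,j)$. Summing over all pairs $i<j$ therefore matches every off-diagonal entry of $A$ exactly and accumulates $\sum_{j\neq i}|a_{ij}|$ on the $i$th diagonal. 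I would then use the rank-one atoms $e_i e_i^T$ to absorb the remaining diagonal slack, assigning $e_i e_i^T$ the coefficient $a_{ii}-\sum_{j\neq i}|a_{ij}|$.

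The crux of the argument---and the only place the hypothesis is used---is that these diagonal coefficients are nonnegative, which is exactly the statement that $A$ is diagonally dominant. Thus diagonal dominance is precisely what certifies that the decomposition has nonnegative weights, giving $A\in\cone{U_{n,2}}$. I do not anticipate any genuine obstacle beyond the bookkeeping needed to confirm that both the off-diagonal and diagonal entries of the constructed combination agree with those of $A$; the content of the theorem is entirely captured by this sign observation.
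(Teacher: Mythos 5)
Your proof is correct. Note, however, that the paper itself gives no proof of this statement: it is quoted as a known result of Barker and Carlson~\cite{dd_extreme_rays}, so there is no internal argument to compare yours against. Your two-inclusion argument is the standard proof of this fact: the easy direction follows since $DD_n$ is a convex cone (closed under nonnegative scaling and, via the triangle inequality, under addition) containing each generator $e_ie_i^T$ and $(e_i\pm e_j)(e_i\pm e_j)^T$; and your explicit decomposition
\[
A \;=\; \sum_{i<j,\; a_{ij}\ge 0} a_{ij}\,(e_i+e_j)(e_i+e_j)^T \;+\; \sum_{i<j,\; a_{ij}<0} |a_{ij}|\,(e_i-e_j)(e_i-e_j)^T \;+\; \sum_{i}\Bigl(a_{ii}-\sum_{j\neq i}|a_{ij}|\Bigr)e_ie_i^T
\]
reproduces every entry of $A$ and has nonnegative weights precisely because $A$ is diagonally dominant, which is exactly where the hypothesis must enter. (Strictly speaking this establishes $DD_n=\cone{U_{n,2}}$ but not the slightly stronger assertion in the surrounding text that $DD_n$ has exactly $n^2$ extreme rays, i.e., that each generator actually spans an extreme ray; that refinement, which the paper also attributes to Barker and Carlson, would require an additional argument.)
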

This theorem tells us that $DD_n$ has exactly $n^2$ extreme rays. It also leads to a convenient representation of the dual cone: $$DD_n^* = \{X \in S_n: v_i^TXv_i \geq 0, \ \mbox{for all vectors $v_i$ with at most 2 nonzero components, each equal to $\pm$1} \}.$$

Throughout the paper, we will be initializing our LPs with the DSOS bound; i.e., our initial set of psd atoms $B_i$ will be the $n^2$ rank-one matrices $u_iu_i^T$ in $U_{n,2}$. This is because this bound is often cheap and effective. Moreover, it guarantees feasibility of our initial LPs (see Theorems~\ref{thm:polyopt.dsos.bound.finite} and \ref{thm:DSOSfinite}), which is {\ghtwo important} for starting column generation. One also readily sees that the DSOS bound can be improved if we were to instead optimize over the cone $U_{n,3}$, which has $n^3$ atoms. However, in settings that we are interested in, we cannot afford to include all these atoms; instead, we will have pricing subproblems that try to pick a useful subset (see Section~\ref{sec:poly.opt}).

We remark that an LP-based column generation idea similar to the one in this section is described in \cite{krishnan2006semidefinite}, where it is used as a subroutine for solving the maxcut problem. The method is comparable to ours inasmuch {\ghtwo as some columns are generated using the eigenvalue pricing subproblem.  However, contrary to us, additional columns specific to max cut are also added to the primal.} The initialization step is also differently done, as the matrices $B_i$ in (\ref{eq:LP.inner}) are initially taken to be in $U_{n,1}$ and not in $U_{n,2}$. (This is equivalent to requiring the matrix $C-\sum_{i=1}^m y_iA_i$ to be diagonal instead of diagonally dominant in (\ref{eq:LP.inner}).)

{\ghprime Another related work is \cite{sherali2002enhancing}. In this paper, the initial LP relaxation is obtained via RLT (Reformulation-Linearization Techniques) as opposed to our diagonally dominant relaxation. The cuts are then generated by taking vectors which violate positive semidefiniteness of the optimal solution as in (\ref{eq:dual.of.LP.inner}). The separation subproblem that is solved though is different than the ones discussed here and relies on an $LU$ decomposition of the solution matrix. }

\subsection{SOCP-based column generation}\label{subsec:socp.based.CG}

In a similar vein, we present an SOCP-based column generation algorithm that in our experience often does much better than the LP-based approach. The idea is once again to optimize over structured subsets of the positive semidefinite cone that are SOCP representable and that are larger than the set $SDD_n$ of scaled diagonally dominant matrices. This will be achieved by working with the following SOCP

\begin{equation}\label{eq:SOCP.inner} {\ghtwo
\begin{aligned} 
\max_{y\in\mathbb{R}^m,a_i^j} &  \quad  b^Ty\\
\text{s.t. } & \quad C-\sum_{i=1}^m y_iA_i=\sum_{i=1}^t V_i \begin{pmatrix} a_i^1 & a_i^2 \\ a_i^2 & a_i^3 \end{pmatrix} V_i^T,\\
\ & \quad \begin{pmatrix} a_i^1 & a_i^2 \\ a_i^2 & a_i^3 \end{pmatrix} \succeq 0,\ i=1,\ldots,t.
\end{aligned}}
\end{equation} 

Here, the positive semidefiniteness constraints on the $2 \times 2$ matrices can be imposed via rotated quadratic cone constraints as explained in Section~\ref{subsec:dsos.sdsos}. {\gh The $n\times 2$ matrices $V_i$ are fixed for all $i=1, \ldots, t$.} Note that this is a direct generalization of the LP in (\ref{eq:LP.inner}), in the case where the atoms $B_i$ are rank-one. To generate a new SOCP atom, we work with the dual of (\ref{eq:SOCP.inner}):

\begin{equation}\label{eq:dual.of.SOCP.inner}
\begin{aligned} 
\min_{X\in S_n} &  \quad  C\cdot X\\
\text{s.t. } & \quad A_i\cdot X=b_i, \ i=1,\ldots, m,\\
\  & \quad V_i^TXV_i \succeq 0, \ i=1,\ldots, t.
\end{aligned}
\end{equation} 

Once again, if the optimal solution $X^*$ is psd, we have solved our SDP exactly; if not, we can use $X^*$ to produce new SOCP-based cuts. For example, by placing the two eigenvectors of $X^*$ corresponding to its two most negative eigenvalues as the columns of an $n\times 2$ matrix $V_{t+1}$, we have produced a new useful atom. (Of course, we can also choose to add more pairs of eigenvectors and add multiple atoms.) As in the LP case, by construction, our bound can only improve in every iteration. 


We will always be initializing our SOCP iterations with the SDSOS bound. It is not hard to see that this corresponds to the case where we have ${n \choose 2}$ initial $n \times 2$ atoms $V_i$, which have zeros everywhere, except for a 1 in the first column in position $j$ and a 1 in the second column in position {\ghtwo $k> j$}. We denote the set of all such $n\times 2$ matrices by $\mathcal{V}_{n,2}$.

The {\ghtwo first} step of our procedure is carried out already in  \cite{kim2003exact} for approximating solutions to QCQPs. Furthermore, the work in \cite{kim2003exact} shows that for a particular class of QCQPs, its SDP relaxation and its SOCP relaxation (written respectively in the form of (\ref{eq:dual.sdp}) and (\ref{eq:dual.of.SOCP.inner})) are exact.


\begin{figure}[h]
	\begin{center}
		\mbox{
			\subfigure[LP starting with DSOS and adding 5 atoms.]
			{\label{subfig:dsos.iters}\scalebox{0.52}{\includegraphics{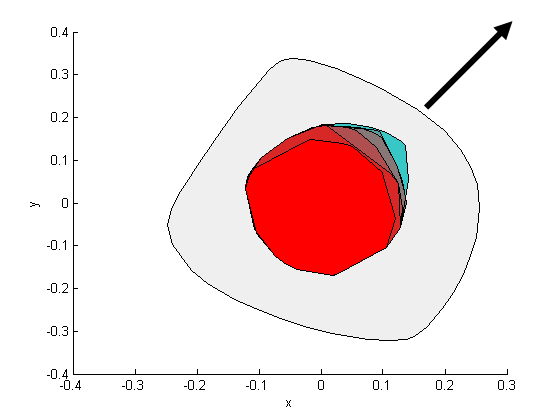}}}}
		\mbox{
			\subfigure[SOCP starting with SDSOS and adding 5 atoms.]
			{\label{subfig:sdsos.iters}\scalebox{0.52}{\includegraphics{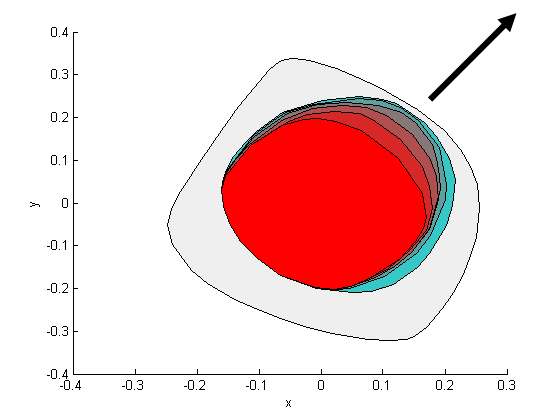}}}
		}
		
		\caption{LP and SOCP-based column generation for inner approximation of a spectrahedron.}
		\label{fig:dsos.sdsos.5atoms}
	\end{center}
	\vspace{-20pt}
\end{figure}

Figure~\ref{fig:dsos.sdsos.5atoms} shows an example of both the LP and SOCP column generation procedures. We produced two $10\times 10$ random symmetric matrices $E$ and $F$. The outer most set is the feasible set of an SDP with the constraint $I+xE+yF\succeq 0.$ (Here, $I$ is the $10\times 10$ identity matrix.) The SDP wishes to maximize $x+y$ over this set. The innermost set in Figure~\ref{subfig:dsos.iters} is the polyhedral set where $I+xE+yF$ is dd. The innermost set in Figure~\ref{subfig:sdsos.iters} is the SOCP-representable set where $I+xE+yF$ is sdd. In both cases, we do 5 iterations of column generation that expand these sets by introducing one new atom at a time. These atoms come from the most negative eigenvector (resp. the two most negative eigenvectors) of the dual optimal solution as explained above. Note that in both cases, we are growing our approximation of the positive semidefinite cone in the direction that we care about (the northeast). This is in contrast to algebraic hierarchies based on ``positive multipliers'' (see the rDSOS and rSDSOS hierarchies in Definition~\ref{def:dsos.sdsos.rdsos.rsdsos} for example), which completely ignore the objective function.

\section{Nonconvex polynomial optimization}\label{sec:poly.opt}

In this section, we apply the ideas described in the previous section to sum of squares algorithms for nonconvex polynomial optimization. In particular, we consider the NP-hard problem of minimizing a form (of degree $\geq 4$) on the sphere. Recall that $z(x,d)$ is the vector of all monomials in $n$ variables with degree $d$.
Let $p(x)$ be a form with $n$ variables and even degree $2d$, and let $\coef{p}$ be the vector of its coefficients with the monomial ordering given by $z(x,2d)$.
Thus $p(x)$ can be viewed as $\coef{p}^T z(x,2d)$. Let $s(x) \mathrel{\mathop:}= (\sum_{i=1}^n x_i^2)^d$. With this notation, the problem of minimizing a form $p$ on the unit sphere can be written as

\begin{eqnarray} &\underset{\lambda}{\max} &\lambda \nonumber \\ 
	&\textup{s.t.} & p(x) - \lambda s(x) \geq 0, \forall x\in\mathbb{R}^n. \label{eq:max.lambda}
\end{eqnarray}
With the SOS programming approach, the following  SDP is solved to get the largest scalar $\lambda$ and an SOS certificate proving that $p(x) -\lambda s(x)$ is nonnegative: 
\begin{eqnarray} &\underset{\lambda,Y}{\max} & \lambda \nonumber\\
	& \textup{s.t.} &  p(x) - \lambda s(x) = z^T(x,d) Y z(x,d), \label{eq:SDPform}\\
	&&  Y \succeq 0. \nonumber
\end{eqnarray}
The sum of squares certificate is directly read from an eigenvalue decomposition of the solution $Y$ to the SDP above and has the form $$p(x)-\lambda s(x) \geq \sum_{i} (z^T(x,d)u_i)^2,$$ where
$Y = \sum_{i} u_iu_i^T$. 
Since all sos polynomials are nonnegative, the optimal value of the SDP in (\ref{eq:SDPform}) is a lower bound to the optimal value of the optimization problem in (\ref{eq:max.lambda}). Unfortunately, before solving the SDP, we do not have access to the vectors $u_i$ in the decomposition of the optimal matrix $Y$. However, the fact that such vectors exist hints at how we should go about replacing $P_n$ by a polyhedral restriction in (\ref{eq:SDPform}): If the constraint $Y \succeq 0$ is changed to 
\begin{equation}\label{eq:Y=sum.alpha.u.u'}
{\gh Y = \sum_{u \in \U} \alpha_u uu^T, \alpha_u \geq 0},
\end{equation}
where {\gh $\U$ is a finite set}, then (\ref{eq:SDPform}) becomes an LP.
This is one interpretation of Ahmadi and Majumdar's work in \cite{isos_journal,dsos_ciss14} where they replace $P_n$ by $DD_n$. Indeed, this is equivalent to taking $\U=\U_{n,2}$ in (\ref{eq:Y=sum.alpha.u.u'}), as shown in Theorem \ref{thm:dsos.corners}. 
We are interested in extending their results by replacing $P_n$ by larger restrictions than $DD_n$. A natural candidate for example would be obtained by changing $\U_{n,2}$ to $\U_{n,3}$. However, although $\U_{n,3}$ is finite, it contains a very large set of vectors even for small values of $n$ and $d$. For instance, when $n=30$ and $d = 4$, $\U_{n,3}$ has over 66 million elements.
Therefore we use column generation ideas to iteratively expand $\U$ in a manageable fashion. To initialize our procedure, {\ghtwo we would like to start} with good enough atoms to have a feasible LP. The following result guarantees that replacing $Y \succeq 0$ with $Y \in DD_n$ always yields an initial feasible LP in the setting that we are interested in.
\begin{theorem}\label{thm:polyopt.dsos.bound.finite}
For any form $p$ of degree $2d$, there exists $\lambda\in\mathbb{R}$ such that $p(x)-\lambda (\sum_{i=1}^n x_i^2)^d$ is dsos.
\end{theorem}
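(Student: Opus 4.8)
The plan is to route everything through a common Gram matrix and push a large positive multiple of the distinguished form $s(x)=(\sum_{i=1}^n x_i^2)^d$ onto the diagonal. The first step I would take is to record a Gram representation of $s$ itself. By the multinomial theorem,
\[
s(x) = \Big(\sum_{i=1}^n x_i^2\Big)^d = \sum_{|\alpha|=d}\binom{d}{\alpha}\,(x^\alpha)^2,
\]
where $\binom{d}{\alpha}=\frac{d!}{\alpha_1!\cdots\alpha_n!}$ is the (strictly positive) multinomial coefficient. Since the monomials $x^\alpha$ with $\sum_i\alpha_i=d$ are exactly the entries of $z(x,d)$, this reads $s(x)=z^T(x,d)\,D\,z(x,d)$, where $D$ is the \emph{diagonal} matrix whose entries are the numbers $\binom{d}{\alpha}>0$. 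The key point of this computation is that $s$ admits a Gram matrix with zero off-diagonal part and a strictly positive diagonal, so it sits strictly inside the set of diagonally dominant matrices.

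Next I would fix any Gram matrix $Q$ of $p$, i.e. write $p(x)=z^T(x,d)\,Q\,z(x,d)$; such a $Q$ exists for every form of degree $2d$ because every degree-$2d$ monomial appears among the entries of $z(x,d)z^T(x,d)$. Then, for any scalar $\mu$,
\[
p(x)+\mu\, s(x) = z^T(x,d)\,(Q+\mu D)\,z(x,d),
\]
so it suffices to choose $\mu$ so that $Q+\mu D$ is diagonally dominant. The off-diagonal entries of $Q+\mu D$ coincide with those of $Q$ and are independent of $\mu$, whereas its $i$-th diagonal entry $Q_{ii}+\mu\binom{d}{\alpha^{(i)}}$ grows linearly in $\mu$ with strictly positive slope. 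Hence taking
\[
\mu \;\ge\; \max\!\Big\{0,\ \max_i \tfrac{\sum_{j\neq i}|Q_{ij}| - Q_{ii}}{\binom{d}{\alpha^{(i)}}}\Big\}
\]
forces $(Q+\mu D)_{ii}\ge\sum_{j\neq i}|(Q+\mu D)_{ij}|$ for every $i$, so $Q+\mu D$ is diagonally dominant and therefore $p+\mu s$ is dsos. Setting $\lambda=-\mu$ then yields a scalar with $p(x)-\lambda\,(\sum_{i=1}^n x_i^2)^d$ dsos, as desired.

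I do not expect a serious technical obstacle; the one thing that must be gotten right is the Gram computation for $s$, which is really the crux of the argument. Everything hinges on $s$ having a Gram matrix that is diagonal with a strictly positive diagonal, so that a sufficiently large multiple of it dominates the fixed off-diagonal part of \emph{any} competing Gram matrix $Q$ of $p$. Conceptually—and this is the statement worth noting—$s$ lies in the interior of the cone $DSOS_{n,2d}$, so the ray $p-\lambda s$ is forced into that cone as $\lambda\to-\infty$; the explicit threshold on $\mu$ above is just a quantitative version of this openness argument.
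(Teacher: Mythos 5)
Your proof is correct, and it rests on exactly the same crux as the paper's: the multinomial expansion showing that $s(x)=(\sum_{i=1}^n x_i^2)^d$ admits a Gram matrix $D$ that is diagonal with strictly positive entries. Where you diverge is in how that observation is cashed in. The paper stays at the level of forms and convex geometry: from the diagonal Gram matrix it concludes that $s$ lies in the \emph{interior} of the cone $DSOS_{n,2d}$, hence $(1-\alpha)s+\alpha p$ is dsos for sufficiently small $\alpha>0$, and then it scales by the cone property to get that $\frac{1-\alpha}{\alpha}s+p$ is dsos, taking $\lambda\leq -\frac{1-\alpha}{\alpha}$. You instead work entirely at the matrix level: fix any symmetric Gram matrix $Q$ of $p$ (which exists since every degree-$2d$ monomial is a product of two degree-$d$ monomials, and one can always symmetrize), observe that $Q+\mu D$ is a Gram matrix of $p+\mu s$ whose off-diagonal part does not depend on $\mu$, and pick $\mu$ explicitly large enough to force diagonal dominance. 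Each route buys something: yours is more elementary and constructive, yielding a computable threshold for $\lambda$ and sidestepping the (standard, but implicit in the paper) point that the surjective linear map $Q\mapsto z^T(x,d)\,Q\,z(x,d)$ sends the open set of strictly diagonally dominant matrices to an open neighborhood of $s$ inside $DSOS_{n,2d}$; the paper's version is shorter and isolates the conceptual fact---$s$ is an interior point of the dsos cone---which is the reusable statement that guarantees feasibility of the initial LP no matter what $p$ is. Your closing remark shows you recognize this: your explicit bound on $\mu$ is precisely a quantitative witness of the interiority argument the paper invokes qualitatively.
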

\begin{proof}
  As before, let $s(x) = (\sum_{i=1}^n x_i^2)^d$.
We observe that the form $s(x)$ is strictly in the interior of $DSOS_{n,2d}$. Indeed, by expanding out the expression we see that we can write $s(x)$ as $z^T(x,d)Qz(x,d)$, where $Q$ is a diagonal matrix with all diagonal entries positive. So $Q$ is in the interior of $DD_{{n+d-1 \choose d}}$, and hence $s(x)$ is in the interior of $DSOS_{n,2d}$. {\ghtwo This implies that for $\alpha>0$ small enough, the form $$(1-\alpha)s(x)+\alpha p(x)$$ will be dsos. Since $DSOS_{n,2d}$ is a cone, the form $$\frac{(1-\alpha)}{\alpha}s(x)+ p(x)$$ will also be dsos. By taking $\lambda$ to be smaller than or equal to} $-\frac{1-\alpha}{\alpha}$, the claim is established. 
\end{proof}

As $DD_n\subseteq SDD_n$, the theorem above implies that replacing $Y \succeq 0$ with $Y \in SDD_n$ also yields an initial feasible SOCP. Motivated in part by this theorem, we will always start our LP-based iterative process with the restriction that $Y \in DD_n$. Let us now explain how we improve on this approximation via column generation.

Suppose we have a set $\U$ of vectors in $\mathbb{R}^n$, whose outerproducts form all of the rank-one psd atoms that we want to consider. This set could be finite but very large, or even infinite. For our purposes $\U$ always includes $\U_{n,2}$, as we initialize our algorithm with the dsos relaxation. Let us consider first the case where $\U$ is finite: $\U = \{u_1, \ldots, u_t\}.$ Then the problem that we are interested in solving is

\begin{eqnarray*} &\underset{\lambda,\alpha_j}{\max} & \lambda \\
	& \textup{s.t.} &  p(x) - \lambda s(x) = z^T(x,d) Y z(x,d), \\
	&&  Y = \sum_{j=1}^t\alpha_j u_ju_j^T, \  \alpha_j \geq 0 \textup{ for } j=1, \ldots, t.
\end{eqnarray*}
Suppose $z(x,2d)$ has $m$ monomials and let the $i$th monomial in $p(x)$ have coefficient $b_i$, i.e., $\coef{p}= (b_1, \ldots, b_m)^T$.
%
Also let $s_i$ be the $i$th entry in $\coef{s(x)}$. We rewrite the previous problem as
\begin{eqnarray*} &\underset{\lambda,\alpha_j}{\max} & \lambda \\
	& \textup{s.t.} &  A_i \cdot Y + \lambda s_i = b_i \textup{ for } i =1, \ldots, m,\\
	&&  Y = \sum_{j=1}^t \alpha_j u_ju_j^T, \  \alpha_j \geq 0 \textup{ for } j=1, \ldots, t.
\end{eqnarray*}
where $A_i$ is a matrix that collects entries of $Y$ {\ghtwo that contribute to the $i^{th}$ monomial in $z(x,2d)$,} when $z^T(x,d) Y z(x,d)$ is expanded out. 
{\gh The above is equivalent to}
\begin{eqnarray} &\underset{\lambda,\alpha_j}{\max} & \lambda  \nonumber \\
	& \textup{s.t.}&  \sum_j \alpha_j(A_i \cdot {\gh u_ju_j^T}) + \lambda s_i = b_i \ \textup{ for } i=1, \ldots, m, \label{eq:polycol}\\ 
	&& \alpha_j \geq 0 \textup{ for } j=1, \ldots, t. \nonumber
\end{eqnarray}
The dual problem is 
\begin{eqnarray*} &\underset{\mu}{\min} & \sum_{i=1}^m \mu_ib_i  \\
	& \textup{s.t.}&  (\sum_{i=1}^m \mu_iA_i) \cdot{\gh u_ju_j^T} \geq 0,\ j=1,\ldots,t, \\
	&&  \sum_{i=1}^m \mu_is_i = 1.
\end{eqnarray*}
In the column generation framework, suppose we consider only a subset of the primal LP variables corresponding to  the matrices ${\gh u_1u_1^T}, \ldots, {\gh u_ku_k^T}$ for some $k < t$ (call this the reduced primal problem).
Let $(\bar\alpha_1, \ldots, \bar\alpha_k)$ stand for an optimal solution of the reduced primal problem and let $\bar \mu = (\bar\mu_1, \ldots, \bar\mu_m)$ stand for an optimal dual solution. If we have 
\begin{equation}\label{eq:sepsub}  (\sum_{i=1}^m \bar{\mu}_iA_i) \cdot {\gh u_ju_j^T} \geq 0 \textup{ for } j=k+1, \ldots, t,\end{equation}
then $\bar\mu$ is an optimal dual solution for the original larger primal problem with columns $1,\ldots,t$. In other words, if we simply set $\alpha_{k+1} = \cdots = \alpha_t = 0$, then the solution of the reduced primal problem becomes a solution of the original primal problem.
On the other hand, if (\ref{eq:sepsub}) is not true, then suppose the condition is violated for some ${\gh u_lu_l^T}$.
We can augment the reduced primal problem by adding the variable $\alpha_l$, and repeat this process.


Let $B = \sum_{i=1}^m \bar{\mu}_iA_i$.
We can test if (\ref{eq:sepsub}) is false by solving the {\em pricing subproblem}:
\begin{equation}\label{eq:sepsubu}
	\min_{u \in \U} u^TBu.
\end{equation}
If $u^TBu < 0$, then there is an element $u$ in $\U$ such that the matrix $uu^T$ violates the dual constraint written in (\ref{eq:sepsub}). Problem (\ref{eq:sepsubu}) may or may not be easy to solve depending on the set $\U.$ 
For example, an ambitious column generation strategy to improve on dsos (i.e., $\U=\U_{n,2}$), would be to take $\U=\U_{n,n}$; i.e., the set all vectors in $\mathbb{R}^n$ consisting of zeros, ones, and minus ones.
In this case, the pricing problem (\ref{eq:sepsubu})
becomes
\[ \min_{u \in \{0,\pm 1\}^n} u^T B u. \]
Unfortunately, the above problem generalizes the quadratic unconstrained boolean optimization problem (QUBO) and is NP-hard. Nevertheless, there are good heuristics for this problem (see e.g.,~\cite{boros2007local},\cite{dash2013note}) that can be used to find near optimal solutions very fast.
While we did not pursue this pricing subproblem, we did consider optimizing over $\U_{n,3}$. We refer to the vectors in $\U_{n,3}$ as ``triples'' for obvious reasons and generally refer to the process of adding atoms drawn from $U_{n,3}$ as optimizing over ``triples''.




Even though one can theoretically solve (\ref{eq:sepsubu}) with $\U=\U_{n,3}$ in polynomial time by simple enumeration of $n^3$ elements, this is very impractical. 
Our simple implementation is a partial enumeration and is implemented as follows. We iterate through the triples (in a fixed order), and test to see whether the condition $u^TBu \geq 0$ is violated by a given triple $u$, and collect such violating triples in a list.
We terminate the iteration when we collect a fixed number of violating triples (say $t_1$). We then sort the violating triples by increasing values of $u^TBu$ (remember, these values are all negative for the violating triples) and select the $t_2$ most violated triples (or fewer if less than $t_2$ are violated overall) and add them to our current set of atoms.
In a subsequent iteration we start off enumerating triples right after the last triple enumerated in the current iteration so that we do not repeatedly scan only the same subset of triples. Although our implementation is somewhat straightforward and can be obviously improved, we are able to demonstrate that optimizing over triples improves over the best bounds obtained by Ahmadi and Majumdar in a similar amount of time (see Section~\ref{subsec:comput_experim_poly}). 

We can also have pricing subproblems where the set $\U$ is infinite. Consider e.g. the case $\U=\mathbb{R}^n$ in (\ref{eq:sepsubu}). In this case, if there is a feasible solution with a negative objective value, then the problem is clearly unbounded below. Hence, we look for a solution with the smallest value of ``violation'' of the dual constraint divided by the norm of the violating matrix. In other words, we want the expression $u^TBu / \textup{norm}(uu^T)$ to be as small as possible, where $\textup{norm}$ is the Euclidean
norm of the vector consisting of all entries of $uu^T$. This is the same as minimizing $u^TBu/||u||^2$. The eigenvector corresponding to the smallest eigenvalue yields such a minimizing solution. This is the motivation behind the strategy described in the previous section for our LP column generation scheme. In this case, we can use a similar strategy for our SOCP column generation scheme. We replace $Y \succeq 0$ by $Y\in SDD_n$ in (\ref{eq:SDPform}) and iteratively expand $SDD_n$ by using the ``two most negative eigenvector technique'' described in Section \ref{subsec:socp.based.CG}.




\subsection{Experiments with a 10-variable quartic}\label{subsec:10-var.quartic}

We illustrate the behaviour of these different strategies on an example. Let $p(x)$ be a degree-four form defined on 10 variables, where the components of $\coef{p}$ are drawn independently at random from the normal distribution $\mathcal{N}(0,1)$.
Thus $d=2$ and $n=10$, and the form $p(x)$ is `fully dense' in the sense that $\coef{p}$ has essentially all nonzero components.
In Figure~\ref{fig:errorfig}, we show how the lower bound on the optimal value of $p(x)$ over the unit sphere changes per iteration for different methods. The $x$-axis shows the number of iterations of the column generation algorithm, i.e., the number of times columns are added and the LP (or SOCP) is resolved. The $y$-axis shows the lower bound obtained from each LP or SOCP. Each curve represents one way of adding columns. The three horizontal lines (from top to bottom) represent, respectively, the SDP bound, the 1SDSOS bound and the 1DSOS bound. The curve DSOS$_k$ gives the bound obtained by solving LPs, where the first LP has $Y \in DD_n$ and subsequent columns are generated from a single eigenvector corresponding to the most negative eigenvalue of the dual optimal solution as described in Section~\ref{subsec:LP.based.CG}. The LP triples curve also corresponds to an LP sequence, but this time the columns that are added are taken from $U_{n,3}$ and are more than one in each iteration (see the next subsection). This bound saturates when constraints coming from all elements of $U_{n,3}$ are satisfied. Finally, the curve SDSOS$_k$ gives the bound obtained by SOCP-based column generation as explained just above.
\begin{figure}[H]
\centering
\includegraphics[scale=0.7,trim={3cm 8.5cm 3cm 9cm},clip]{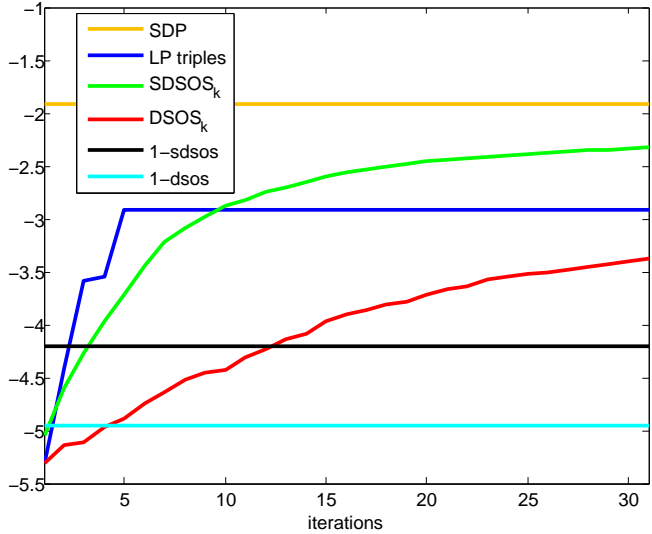}
\caption{Lower bounds for a polynomial of degree 4 in 10 variables obtained via LP and SOCP based column generation}
\label{fig:errorfig}
\end{figure}

\subsection{ Larger computational experiments}\label{subsec:comput_experim_poly}

In this section, we consider larger problem instances ranging from 15 variables to 40 variables: these instances are again fully dense and generated in exactly the same way as the $n=10$ example of the previous subsection. However, contrary to the previous subsection, we only apply our ``triples'' column generation strategy here. This is because the eigenvector-based column generation strategy is too computationally expensive for these problems as we discuss below.

To solve the triples pricing subproblem with our partial enumeration strategy, we set $t_1$ to 300,000 and $t_2$ to 5000. Thus in each iteration, we find up to 300,000 violated triples, and add up to 5000 of them. In other words, we augment our LP by up to 5000 columns in each iteration. This is somewhat unusual as in practice at most a few dozen columns are added in each iteration. The logic for this is that primal simplex is very fast in reoptimizing an LP when a small number of additional columns are added to an LP whose optimal basis is known. However, in our context, we observed that the associated LPs are very hard for the simplex routines inside our LP solver (CPLEX 12.4) and take much more time than CPLEX's interior point solver. We therefore use CPLEX's interior point (``barrier'') solver not only for the initial LP but for subsequent LPs after adding columns. Because interior point solvers do not benefit significantly from warm starts, each LP takes a similar amount of time to solve as the initial LP, and therefore it makes sense to add a large number of columns in each iteration to amortize the time for each expensive solve over many columns. 



Table~\ref{tab:min_hom_opts} is taken from the work of Ahmadi and Majumdar~\cite{isos_journal}, where they report lower bounds on the minimum value of fourth-degree forms on the unit sphere obtained using different methods, and the respective computing times (in seconds).
\begin{table}[H]
\small
\begin{center}
  \begin{tabular}{| c | c | c | c | c | c | c | c | c| c| c|}
    \hline 
 & \multicolumn{2}{|c|}{n=15}  &  \multicolumn{2}{|c|}{n=20} & \multicolumn{2}{|c|}{n=25} &  \multicolumn{2}{|c|}{n=30} & \multicolumn{2}{|c|}{n=40}\\
\cline{2-11}
& bd & t(s) & bd & t(s) & bd & t(s) & bd & t(s) & bd & t(s) \\
\hline
 DSOS & -10.96 & 0.38  & -18.012 & 0.74 & -26.45 & 15.51 &  -36.85 & 7.88 &  -62.30 & 10.68  \\ \hline
 SDSOS  &  -10.43 & 0.53 &  -17.33 & 1.06 &  -25.79 & 8.72 & -36.04 & 5.65 &  -61.25 & 18.66   \\ \hline
1DSOS &  -9.22 & 6.26 &  -15.72 & 37.98 &  -23.58 & 369.08 & NA & NA  & NA & NA  \\ \hline
1SDSOS  & -8.97 & 14.39 & -15.29 & 82.30 & -23.14 & 538.54 & NA & NA & NA & NA   \\ \hline
 SOS & -3.26 & 5.60 & -3.58 & 82.22 & -3.71 & 1068.66 & NA & NA & NA & NA   \\ \hline
  \end{tabular}
    \caption{Comparison of optimal values in \cite{isos_journal} for lower bounding a quartic form on the sphere for varying dimension, along with run times (in seconds). These results are obtained on a 3.4 GHz Windows computer with 16 GB of memory. }
  \label{tab:min_hom_opts}
  \end{center}
\end{table}
In Table~\ref{tab:min_triples}, we give our bounds for the {\ghtwo same problem} instances. We report two bounds, obtained at two different times (if applicable).
In the first case ({\ghtwo rows} labeled R1), the time taken by 1SDSOS in Table~\ref{tab:min_hom_opts} is taken as a limit, and we report the bound from the last column generation iteration occuring {\ghtwo before this time limit}; the 1SDSOS bound is the best non-SDP bound reported in the experiments of Ahmadi and Majumdar. In the {\ghtwo rows} labeled as R2, we take 600 seconds as a limit and report the last bound obtained before this limit.
In a couple of instances ($n=15$ and $n=20$), our column generation algorithm terminates before the 600 second limit, and we report the termination time in this case.
%
%
%

\begin{table}[H]
	\small
	\begin{center}
		\begin{tabular}{ | c | c | c | c | c | c | c | c | c | c| c| c| }
			\hline 
			\  &  \multicolumn{2}{|c|}{n=15}  &  \multicolumn{2}{|c|}{n=20} & \multicolumn{2}{|c|}{n=25} &  \multicolumn{2}{|c|}{n=30} & \multicolumn{2}{|c|}{n=40} \\
			\cline{2-11}
			\ & bd & t(s) & bd & t(s) & bd & t(s) & bd & t(s) & bd & t(s) \\
			
			\hline
			R1 & -6.20 & 10.96 &-12.38 & 70.70 & -20.08 & 508.63 &  N/A & N/A & N/A  & N/A \\
			
			\hline
			
		R2 & -5.57 & 31.19 &-9.02 & 471.39 & -20.08 & 600 &-32.28 & 600 & -35.14 & 600  \\ \hline
		\end{tabular}
		\caption{Lower bounds on the optimal value of a form on the sphere for varying degrees of polynomials using Triples on a 2.33 GHz Linux machine with 32 GB of memory.}
		\label{tab:min_triples}
	\end{center}
\end{table}

{\ghtwo We observe that in the same amount of time (and even on a slightly slower machine), we are able to consistently beat the 1SDSOS bound, which is the strongest non-SDP bound produced in~\cite{isos_journal}. We also experimented with the eigenvalue pricing subproblem in the LP case, with a time limit of 600 seconds. For $n=25$, we obtain a bound of $-23.46$ after adding only $33$ columns in 600 seconds. For $n=40$, we  are only able to add 6 columns and the lower bound obtained is $-61.49$. Note that this bound is worse than the triples bound given in Table \ref{tab:min_triples}. The main reason for being able to add so few columns in the time limit is that each column is almost fully dense (the LPs for n=25 have 20,475 rows, and 123,410 rows for $n=40$). Thus, the LPs obtained are very hard to solve after a few iterations and become harder with increasing $n$. As a consequence, we did not experiment with the eigenvalue pricing subproblem in the SOCP case as it is likely to be even more computationally intensive.}

\section{Inner approximations of copositive programs and the maximum stable set problem}\label{sec:stable.set}

Semidefinite programming has been used extensively for approximation of NP-hard combinatorial optimization problems. One such example is finding the \emph{stability number} of a graph. A stable set (or independent set) of a graph $G=(V,E)$ is a set of nodes of $G$, no two of which are adjacent. The size of the largest stable set of a graph $G$ is called the stability number (or independent set number) of $G$ and is denoted by $\alpha(G).$  Throughout, {\ghtwo $G$} is taken to be an undirected, unweighted graph on $n$ nodes. It is known that the problem of testing if $\alpha(G)$ is greater than a given integer $k$ is NP-hard \cite{Karp}. Furthermore, the stability number cannot be approximated {\ghtwo to a factor of} $n^{1-\epsilon}$ for any $\epsilon >0$ unless P$=$NP \cite{HaastadJohan}. The natural integer programming formulation of this problem is given by
\begin{equation} \label{IPformulation}
\begin{aligned}
\alpha(G)=&\underset{x_i}{\max} \sum_{i=1}^n x_i\\
&\text{s.t. } x_i+x_j \leq 1, \forall (i,j) \in E,\\
&x_i \in \{0,1\}, \forall i=1,\ldots,n.
\end{aligned}
\end{equation}
Although this optimization problem is intractable, there are several computationally-tractable relaxations that provide upper bounds on the stability number of a graph. For example, the obvious LP relaxation of (\ref{IPformulation}) can be obtained by relaxing the constraint $x_i \in \{0,1\}$ to $x_i \in [0,1]$:
\begin{equation} \label{LPformulation}
\begin{aligned}
LP(G)=&\underset{x_i}{\max} \sum_i x_i\\
&\text{s.t. } x_i+x_j \leq 1, \forall (i,j) \in E,\\
&x_i \in [0,1], \forall i=1,\ldots,n.
\end{aligned}
\end{equation}
{\ghtwo This bound can be improved upon by adding the so-called \emph{clique inequalities} to the LP, which are of the form $x_{i_1}+x_{i_2}+\ldots+x_{i_k} \leq 1$ when nodes $(i_1,i_2,\ldots,i_k)$ form a clique in $G$. Let $C_k$ be the set of all $k$-clique inequalities in $G$. This leads to a hierarchy of LP relaxations:}
\begin{equation}\label{LPkformulation}
\begin{aligned}
LP_k(G)=&\max \sum_i x_i,\\
&x_i \in [0,1], \forall i=1,\ldots,n,\\
& C_2,\ldots,C_k \text{ are satisfied.}
\end{aligned}
\end{equation}
Notice that for $k=2,$ this simply corresponds to (\ref{LPformulation}), in other words, $LP_2(G)=LP(G)$.

{\ghtwo In addition to LPs, there are also semidefinite programming (SDP)} relaxations that provide upper bounds to the stability number. {\ghtwo The most well-known} is perhaps the Lov\'{a}sz theta number $\vartheta(G)$ \cite{Lovasz}, which is defined as the optimal value of the following SDP:
\begin{equation}\label{Lovasz}
\begin{aligned}
\vartheta(G)\mathrel{\mathop{:}}= &\underset{X}{\max } ~J\cdot X\\
&\text{s.t. } I\cdot X=1,\\
&X_{i,j}=0, \forall (i,j) \in E\\
&X \in P_n.
\end{aligned}
\end{equation}
Here $J$ is the all-ones matrix and $I$ is the identity matrix of size $n$.
The Lov\'{a}sz theta number is known to always give at least as good of an upper bound as the LP in (\ref{LPformulation}), even with the addition of clique inequalities of all sizes (there are exponentially many); see, e.g., \cite[Section 6.5.2]{LaurentVall} for a proof. In other words,
$$\vartheta(G) \leq LP_k(G), \forall k.$$

An alternative SDP relaxation for stable set is due to de Klerk and Pasechnik. In \cite{dp}, they show that the stability number can be obtained through a conic linear program over the set of copositive matrices. Namely,
\begin{equation} \label{Copos}
\begin{aligned}
\alpha(G)=&\min_{\lambda} \lambda\\
&\text{s.t. } \lambda(I+A)-J \in C_n,
\end{aligned}
\end{equation}
where $A$ is the adjacency matrix of $G$.
Replacing $C_n$ by the restriction $P_n + N_n$, {\ghtwo one} obtains the aforementioned relaxation through the following SDP
\begin{equation}\label{CoposSDPRelax}
\begin{aligned}
SDP(G)\mathrel{\mathop{:}}= &\min_{\lambda,X} \lambda\\
&\text{s.t. } \lambda(I+A)-J \geq X,\\
&X \in P_n.  
\end{aligned}
\end{equation}
This latter SDP is more expensive to solve than the Lov\'{a}sz SDP (\ref{Lovasz}), but the bound that it obtains is always at least as good (and sometimes strictly better). 
A proof of this statement is given in \cite[Lemma 5.2]{dp}, where it is shown that (\ref{CoposSDPRelax}) is an equivalent formulation of an SDP of Schrijver~\cite{sch}, which produces stronger upper bounds than (\ref{Lovasz}).

Another reason for the interest in the copositive approach is that it allows for well-known SDP and LP hierarchies---developed respectively by Parrilo \cite[Section 5]{PhD:Parrilo} and de Klerk and Pasechnik \cite{dp}---that produce a sequence of improving bounds on the stability number. In fact, by appealing to Positivstellensatz results of P\'{o}lya \cite{Polya}, and Powers and Reznick \cite{PR}, de Klerk and Pasechnik show that their LP hierarchy produces the exact stability number in $\alpha^2(G)$ number of steps \cite[Theorem 4.1]{dp}. This immediately implies the same result for stronger hierarchies, such as the SDP hierarchy of Parrilo \cite{PhD:Parrilo}, or the rDSOS and rSDSOS hierarchies of Ahmadi and Majumdar \cite{isos_journal}.

One notable difficulty with the use of copositivity-based SDP relaxations such as (\ref{CoposSDPRelax}) in applications is scalibility. For example, it takes more than 5 hours to solve (\ref{CoposSDPRelax}) when the input is a randomly generated Erd\'{o}s-Renyi graph with 300 nodes and edge probability $p=0.8$. \footnote{The solver in this case is MOSEK~\cite{mosek} and the machine used has 3.4GHz speed and 16GB RAM; see Table \ref{SOCPbounds} for more results. The solution time with the popular SDP solver SeDuMi~\cite{sedumi} e.g. would be several times larger.} Hence, instead of using (\ref{CoposSDPRelax}), we will solve a sequence of LPs/SOCPs generated in an iterative fashion. These easier optimization problems will provide upper bounds on the stability number in a more reasonable amount of time, though they will be weaker than the ones obtained via (\ref{CoposSDPRelax}). 


We will derive both our LP and SOCP sequences from formulation (\ref{Copos}) of the stability number. To obtain the first LP in the sequence, we replace $C_n$ by $DD_n+N_n$ (instead of replacing $C_n$ by $P_n+N_n$ as was done in (\ref{CoposSDPRelax})) and get
\begin{equation}\label{CoposDDRelax}
\begin{aligned} 
DSOS_1(G)\mathrel{\mathop{:}}= &\min_{\lambda,X} \lambda\\
&\text{s.t. } \lambda(I+A)-J \geq X,\\
&X \in DD_n. 
\end{aligned}
\end{equation}
This is an LP whose optimal value is a valid {\ghprime upper bound} on the stability number as $DD_n \subseteq P_n$.

\begin{theorem}\label{thm:DSOSfinite}
	The LP in (\ref{CoposDDRelax}) is always feasible. 
\end{theorem}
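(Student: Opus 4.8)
The plan is to prove feasibility constructively, by exhibiting an explicit pair $(\lambda, X)$ that satisfies the two constraints of \iref{CoposDDRelax}. First I would pin down what the matrix inequality means: exactly as in the passage from \iref{Copos} to \iref{CoposSDPRelax}, where $C_n$ is replaced by $P_n+N_n$ (and here by $DD_n + N_n$), the relation $\lambda(I+A)-J \geq X$ is to be read \emph{entrywise}, encoding membership of $\lambda(I+A)-J-X$ in the cone $N_n$ of symmetric matrices with nonnegative entries. Thus feasibility of \iref{CoposDDRelax} amounts to finding a scalar $\lambda$ and a diagonally dominant $X$ with $X \leq \lambda(I+A)-J$ entrywise.

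The key observation I would isolate is that the off-diagonal entries of $\lambda(I+A)-J$ at \emph{non-edges} equal $-1$ independently of $\lambda$ (there $A_{ij}=0$ while $J_{ij}=1$), so enlarging $\lambda$ alone can never make the right-hand side entrywise nonnegative; in particular the naive choice $X=0$ fails whenever $G$ is not complete. The role of $X$ is therefore precisely to absorb these $-1$'s through negative off-diagonal entries, while its diagonal is kept large enough to preserve diagonal dominance. Concretely, I would set $X_{ij}=-1$ for each non-adjacent pair $i\neq j$, set $X_{ij}=0$ for each edge $(i,j)\in E$, and set $X_{ii}=n$ for all $i$. This $X$ is symmetric, and since each row contains at most $n-1$ off-diagonal entries of modulus $1$ (one per non-neighbour), we get $\sum_{j\neq i}|X_{ij}| \leq n-1 \leq n = X_{ii}$, so $X\in DD_n$ by Definition~\ref{def:dd.sdd}.

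Taking $\lambda = n+1$ then verifies the entrywise inequality in all three cases: on the diagonal, $\lambda-1 = n = X_{ii}$; at edges, $\lambda-1 = n \geq 0 = X_{ij}$; and at non-edges, $-1 = X_{ij}$, with equality. Hence $(\lambda,X)=(n+1,X)$ is feasible, which establishes the claim.

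I do not expect a genuine obstacle here once the entrywise reading of the constraint and the rigidity of the non-edge entries are recognized; the only thing to watch is that the negative off-diagonal entries forced by the non-edges do not destroy diagonal dominance, which is guaranteed by padding the diagonal with a value ($n$) that dominates the worst-case count $n-1$ of non-neighbours. The main conceptual point, rather than a technical difficulty, is seeing why a trivial choice such as $X=0$ cannot work and why $X$ must carry the combinatorial structure of the complement of $G$.
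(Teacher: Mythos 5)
Your proof is correct and follows essentially the same route as the paper's: both construct the feasible $X$ (called $D$ in the paper) with $-1$ at non-edges, $0$ at edges, and a large diagonal, verify diagonal dominance by counting non-neighbours, and then choose $\lambda$ so that the residual $\lambda(I+A)-J-X$ is entrywise nonnegative. The only difference is cosmetic: the paper takes the slightly tighter value $\lambda = n-\min_i d_i + 1$ with diagonal entries $\lambda-1$, whereas you take $\lambda = n+1$ with diagonal entries $n$.
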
 
\begin{proof} 
	We need to show that for any $n\times n$ adjacency matrix $A$, there exists a diagonally dominant matrix $D$, a nonnegative matrix $N$, and a scalar $\lambda$ such that 
	\begin{align} \label{lemma:eq}
	\lambda(I+A)-J=D+N.
	\end{align} 
	 Notice first that $\lambda(I+A)-J$ is a matrix with $\lambda -1$ on the diagonal and at entry $(i,j)$, if $(i,j)$ is an edge in the graph, and with $-1$ at entry $(i,j)$ if $(i,j)$ is not an edge in the graph. If we denote by $d_i$ the degree of node $i$, then let us take $\lambda=n-\min_i d_i+1$ and $D$ a matrix with diagonal entries $\lambda-1$ and off-diagonal entries equal to $0$ if there is an edge, and $-1$ if not. This matrix is diagonally dominant as there are at most $n-\min_i d_i$ minus ones on each row. Furthermore, if we take $N$ to be a matrix with $\lambda-1$ at the entries $(i,j)$ where $(i,j)$ is an edge in the graph, then (\ref{lemma:eq}) is satisfied and $N\geq0$.
\end{proof}

Feasibility of this LP is important for us as it allows us to initiate column generation. By contrast, if we were to replace the diagonal dominance constraint by a diagonal constraint for example, the LP could fail to be feasible. This fact has been observed by de Klerk and Pasechnik in \cite{dp} and Bomze and de Klerk in \cite{BdK}.

To generate the next LP in the sequence via column generation, we think of the extreme-ray description of the set of diagonally dominant matrices as explained in Section~\ref{sec:cg.overview}. Theorem~\ref{thm:dsos.corners} tells us that these are given by the matrices in $U_{n,2}$ and so we can rewrite (\ref{CoposDDRelax}) as
\begin{equation}\label{CoposDDRelaxCorners}
\begin{aligned} 
DSOS_1(G)\mathrel{\mathop{:}}= &\min_{\lambda,\alpha_i} \lambda\\
&\text{s.t. } \lambda(I+A)-J \geq X,\\
&X=\sum_{u_iu_i^T\in {U}_{n,2}} \alpha_i u_iu_i^T,\\
&\alpha_i \geq 0,\  i=1,\ldots,n^2.
\end{aligned}
\end{equation}

The column generation procedure aims to add new matrix atoms to the existing set $U_{n,2}$ in such a way that the current bound $DSOS_1$ improves. There are numerous ways of choosing these atoms. We focus first on the cutting plane approach based on eigenvectors. The dual of (\ref{CoposDDRelaxCorners}) is the LP
\begin{equation}\label{CoposDDRelaxCornersDual}
\begin{aligned} 
DSOS_1(G)\mathrel{\mathop{:}}= &\max_{X} J\cdot X,\\
&\text{s.t. } (A+I) \cdot X=1,\\
&X\geq 0,\\
& (u_iu_i^T) \cdot X \geq 0, \forall u_iu_i^T \in {U}_{n,2}.
\end{aligned}
\end{equation}
If our optimal solution $X^*$ to (\ref{CoposDDRelaxCornersDual}) is positive semidefinite, then we are obtaining the best bound we can possibly produce, which is the SDP bound of (\ref{CoposSDPRelax}). If this is not the case however, we pick our atom matrix to be the outer product $uu^T$ of the eigenvector $u$ corresponding to the most negative eigenvalue of $X^*$. The optimal value of the LP
\begin{equation}\label{iterativeLP}
\begin{aligned} 
DSOS_2(G)\mathrel{\mathop{:}}= &\max_{X} J \cdot X,\\
&\text{s.t. } (A+I) \cdot X=1,\\
&X\geq 0,\\
& (u_iu_i^T) \cdot X \geq 0, \forall u_iu_i^T \in {U}_{n,2},\\
& (uu^T) \cdot X \geq 0
\end{aligned}
\end{equation}
that we derive is guaranteed to be no worse than $DSOS_1$ as the feasible set of (\ref{iterativeLP}) is smaller than the feasible set of (\ref{CoposDDRelaxCornersDual}). Under mild nondegeneracy assumptions (satisfied, e.g., by uniqueness of the optimal solution to (\ref{CoposDDRelaxCornersDual})), the new bound will be strictly better. By reiterating the same process, we create a sequence of LPs whose optimal values $DSOS_1, DSOS_2, \ldots$ are a nonincreasing sequence of upper bounds on the stability number.


Generating the sequence of SOCPs is done in an analogous way. Instead of replacing the constraint $X \in P_n$ in (\ref{CoposSDPRelax}) by $X \in DD_n$, we replace it by $X \in SDD_n$ and get
\begin{equation}\label{CoposSDDRelax}
\begin{aligned} 
SDSOS_1(G)\mathrel{\mathop{:}}= &\min_{\lambda,X} \lambda\\
&\text{s.t. } \lambda(I+A)-J \geq X,\\
&X \in SDD_n.
\end{aligned}
\end{equation}
Once again, we need to reformulate the problem in such a way that the set of scaled diagonally dominant matrices is described as some combination of psd ``atom" matrices. In this case, we can write any matrix $X \in SDD_n$ as $$X=\sum_{V_i\in\mathcal{V}_{n,2}}V_i\begin{pmatrix} a_i^1 & a_i^2 \\ a_i^2 & a_i^3\end{pmatrix}V_i^T,$$ where $a_i^1,a_i^2, a_i^3$ are variables making the $2\times 2$ matrix psd, and the $V_i$'s are our atoms. Recall from Section~\ref{sec:cg.overview} that the set $\mathcal{V}_{n,2}$ consists of all $n\times 2$ matrices which have zeros everywhere, except for a 1 in the first column in position $j$ and a 1 in the second column in position $k\neq j$. This gives rise to an equivalent formulation of (\ref{CoposSDDRelax}):
\begin{equation}\label{CoposSDDRelaxCorners}
\begin{aligned} 
SDSOS_1(G)\mathrel{\mathop{:}}= &\min_{\lambda,a_i^j} \lambda\\
&\text{s.t. } \lambda(I+A)-J \geq X\\
&X=\sum_{V_i\in\mathcal{V}_{n,2}}V_i\begin{pmatrix} a_i^1 & a_i^2 \\ a_i^2 & a_i^3\end{pmatrix}V_i^T\\
&\begin{pmatrix} a_i^1 & a_i^2 \\ a_i^2 & a_i^3\end{pmatrix} \succeq 0, \  i=1,\ldots,\binom{n}{2}.
\end{aligned}
\end{equation}
Just like the LP case, we now want to generate one (or more) $n \times 2$ matrix $V$ to add to the set $\{V_i\}_i$ so that the bound $SDSOS_1$ improves. We do this again by using a cutting plane approach originating from the dual of (\ref{CoposSDDRelaxCorners}):
\begin{equation}\label{CoposSDDRelaxCornersDual}
\begin{aligned} 
SDSOS_1(G)\mathrel{\mathop{:}}= &\max_{X} J \cdot X\\
&\text{s.t. } (A+I) \cdot X=1,\\
&X\geq 0,\\
& V_i^T\cdot X V_i \succeq 0, \ i=1,\ldots,\binom{n}{2}.
\end{aligned}
\end{equation} 

Note that strong duality holds between this primal-dual pair as it is easy to check that both problems are strictly feasible. We then take our new atom to be $$V=(w_1 ~w_2),$$ where $w_1$ and $w_2$ are two eigenvectors corresponding to the two most negative eigenvalues of $X^*$, the optimal solution of (\ref{CoposSDDRelaxCornersDual}). If $X^*$ only has one negative eigenvalue, we add a linear constraint to our problem; if $X^* \succeq 0$, then the bound obtained is identical to the one obtained through SDP (\ref{CoposSDPRelax}) and we cannot hope to improve. Our next iterate is therefore
\begin{equation}\label{iterativeSOCP}
\begin{aligned} 
SDSOS_2(G)\mathrel{\mathop{:}}= &\max_{X} J \cdot X\\
&\text{s.t. } (A+I) \cdot X=1,\\
&X\geq 0,\\
& V_i^T\cdot X V_i \succeq 0, \ i=1,\ldots,\binom{n}{2},\\
& V^T\cdot X V \succeq 0.
\end{aligned}
\end{equation} 
Note that the optimization problems generated iteratively in this fashion always remain SOCPs and their optimal values form a nonincreasing sequence of upper bounds on the stability number.

To illustrate the column generation method for both LPs and SOCPs, we consider the complement of the Petersen graph as shown in Figure \ref{fig:Petersen} as an example. The stability number of this graph is 2 and one of its maximum stable sets is designated by the two white nodes. In Figure \ref{Petersenbounds}, we compare the upper bound obtained via (\ref{CoposSDPRelax}) and the bounds obtained using the iterative LPs and SOCPs as described in (\ref{iterativeLP}) and (\ref{iterativeSOCP}). 

\begin{figure}[h]
	\begin{center}
		\mbox{
			\subfigure[The complement of Petersen Graph]
			{\label{fig:Petersen}\scalebox{0.3}{\includegraphics[trim={1cm 0cm 2cm 1cm},clip]{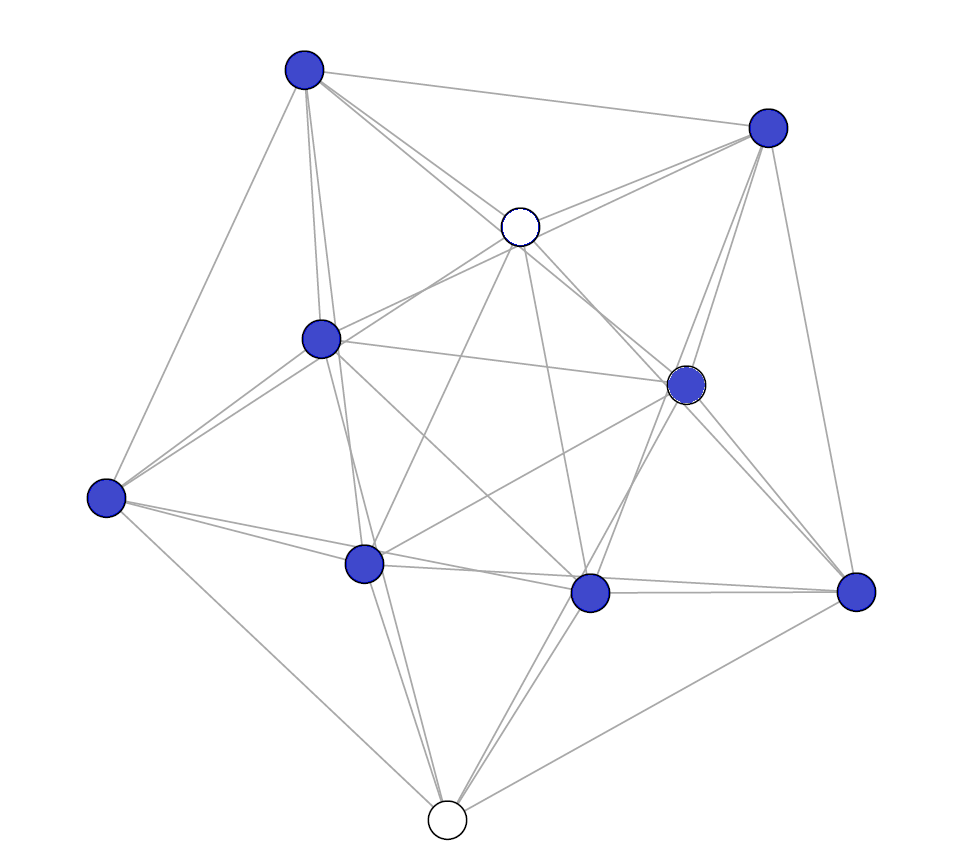}}}}
		\mbox{
			\subfigure[Upper bounds on the stable set number $\alpha(G)$]
			{\label{Petersenbounds}\scalebox{0.63}{\includegraphics[trim={3.5cm 8.5cm 3.5cm 9cm},clip]{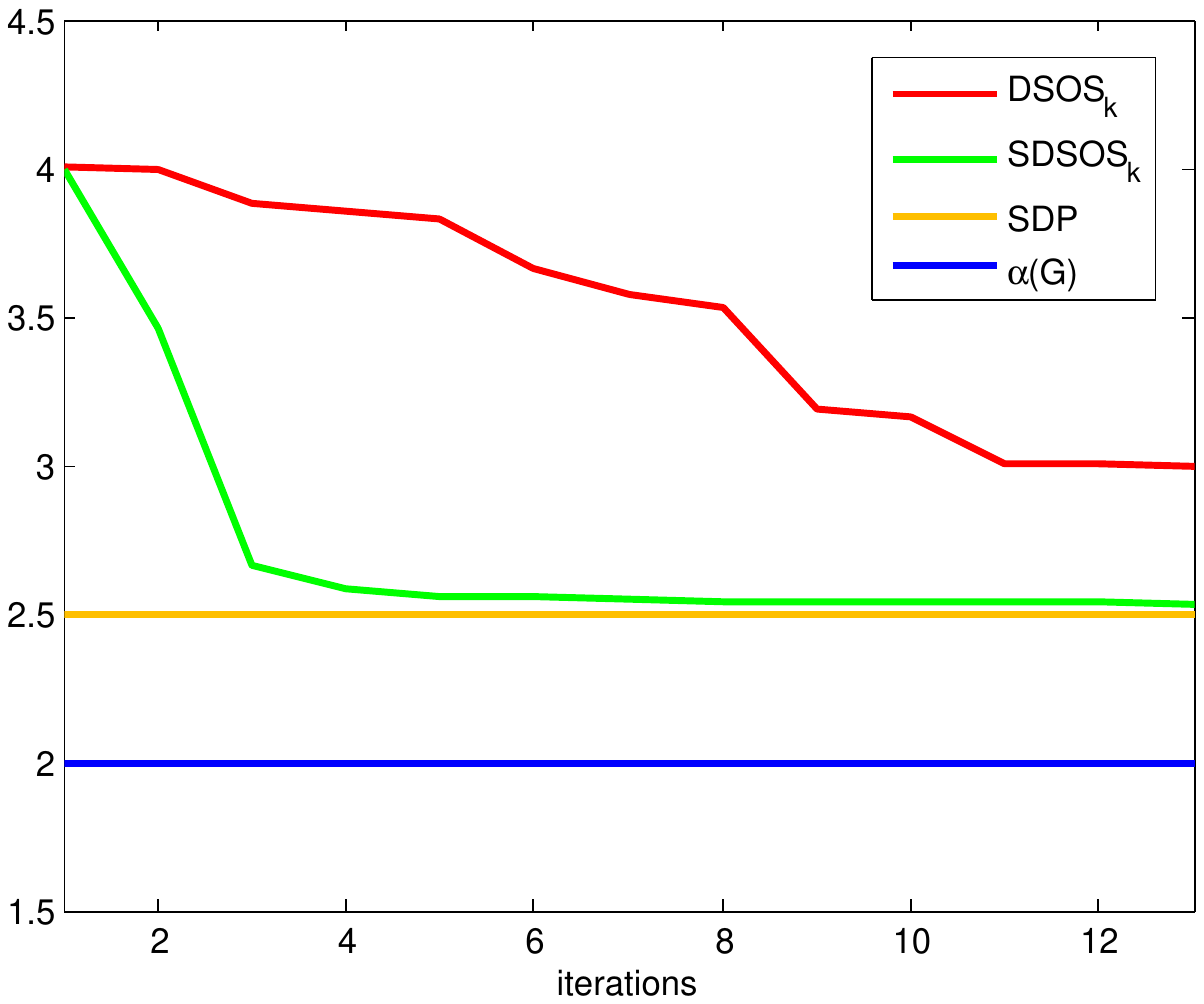}}}
		}
		
		\caption{Bounds obtained through SDP (\ref{CoposSDPRelax}) and iterative SOCPs and LPs for the complement of the Petersen graph.}
		\label{fig:test}
	\end{center}
	\vspace{-20pt}
\end{figure}

%

Note that it takes 3 iterations for the SOCP sequence to produce an {\ghprime upper bound} strictly within one unit of the actual stable set number (which would immediately tell us the value of $\alpha$), whereas it takes 13 iterations for the LP sequence to do the same. It is also interesting to compare the sequence of LPs/SOCPs obtained through column generation to the sequence that one could obtain using the concept of $r$-dsos/$r$-sdsos polynomials. Indeed, LP (\ref{CoposDDRelax}) (resp. SOCP (\ref{CoposSDDRelax})) can be written in polynomial form as 
\begin{equation}
\begin{aligned}
DSOS_1(G) \text{ (resp. $SDSOS_1(G)$)}=&\min_{\lambda} \lambda\\
&\text{s.t. } \begin{pmatrix} x_1^2 \\ \vdots \\ x_n^2 \end{pmatrix}^T (\lambda (I+A) -J) \begin{pmatrix} x_1^2 \\ \vdots \\ x_n^2 \end{pmatrix} \text{ is dsos (resp. sdsos).}
\end{aligned}
\end{equation}
Iteration $k$ in the sequence of LPs/SOCPs would then correspond to requiring that this polynomial be $k$-dsos or $k$-sdsos. For this particular example, we give the 1-dsos, 2-dsos, 1-sdsos and 2-sdsos bounds in Table \ref{Table1dsos1sdsos}. 

\begin{table}[h]
	\centering
	\begin{tabular}{|c|c|c|}
		\hline
		Iteration & $r$-dsos bounds & $r$-sdsos bounds\\
		\hline
		$r=0$ & 4.00 & 4.00 \\
		\hline
		$r=1$ & 2.71 & 2.52\\
		\hline 
		$r=2$ & 2.50 & 2.50 \\
		\hline
	\end{tabular}
	\caption{Bounds obtained through rDSOS and rSDSOS hierarchies.}
	\label{Table1dsos1sdsos}
	
\end{table}

Though this sequence of LPs/SOCPs gives strong upper bounds, each iteration is more expensive than the iterations done in the column generation approach. Indeed, in each of the column generation iterations, only one constraint is added to our problem, whereas in the rDSOS/rSDSOS hierarchies, the number of constraints is roughly multiplied by $n^2$ at each iteration.

%

Finally, we investigate how these techniques perform on graphs with a large number of nodes, where the SDP bound cannot be found in a reasonable amount of time. The graphs we test these techniques on are Erd\"os-R\'{e}nyi graphs $ER(n,p)$; i.e. graphs on $n$ nodes where an edge is added between each pair of nodes independently and with probability $p$. In our case, we take $n$ to be between $150$ and $300$, and $p$ to be either $0.3$ or $0.8$ so as to experiment with both medium and high density graphs.\footnote{All instances used for these tests are available online at \url{http://aaa.princeton.edu/software}.} 


In Table \ref{SOCPbounds}, we present the results of the iterative SOCP procedure and contrast them with the SDP bounds. The third column of the table contains the SOCP upper bound obtained through (\ref{CoposSDDRelaxCornersDual}); the solver time needed to obtain this bound is given in the fourth column. The fifth and sixth columns correspond respectively to the SOCP iterative bounds obtained after 5 mins solving time and 10 mins solving time. Finally, the last two columns chart the SDP bound obtained from (\ref{CoposSDPRelax}) and the time in seconds needed to solve the SDP. All SOCP and SDP experiments were done using Matlab, the solver MOSEK \cite{mosek}, the SPOTLESS toolbox \cite{SPOT_Megretski}, and a computer with 3.4 GHz speed and 16 GB RAM.

\begin{table}[H] 
	\centering
	\begin{tabular}{c|c|cc|c|c|cc}
		\hline
		n & p & $SDSOS_1$ & time (s) & $SDSOS_k$ (5 mins) & $SDSOS_k$ (10 mins) &$SDP(G)$ & time (s)\\
		\hline
		150 & 0.3 & 105.70 & 1.05 & 39.93 & 37.00 & 20.43 & 221.13\\
		150 & 0.8 & 31.78 & 1.07 & 9.96& 9.43 & 6.02 & 206.28\\
		200 & 0.3 & 140.47 & 1.84& 70.15& 56.37 & 23.73 & 1,086.42\\
		200 & 0.8 & 40.92 & 2.07 & 12.29 & 11.60 & 6.45 & 896.84\\
		250 & 0.3 & 176.25 & 3.51 & 111.63 & 92.93 & 26.78 & 4,284.01 \\
		250 & 0.8 & 51.87 & 3.90 & 17.25 & 15.39 & 7.18 & 3,503.79\\
		300 & 0.3 & 210.32 & 5.69 & 151.71 & 134.14 & 29.13 & 32,300.60\\
		300 & 0.8 & 60.97 & 5.73 & 19.53 & 17.24 & 7.65 & 20,586.02\\
		\hline
	\end{tabular}
	\caption{SDP bounds and iterative SOCP bounds obtained on ER(n,p) graphs.}
	\label{SOCPbounds}
\end{table} 

From the table, we note that it is better to run the SDP rather than the SOCPs for small $n$, as the bounds obtained are better and the times taken to do so are comparable. However, as $n$ gets bigger, the SOCPs become valuable as they provide good upper bounds in reasonable amounts of time. For example, for $n=300$ and $p=0.8$, the SOCP obtains a bound that is only twice as big as the SDP bound, but it does so 30 times faster. {\gh The sparser graphs don't do as well, a trend that we will also observe in Table \ref{LPbounds}.}
Finally, notice that the improvement in the first 5 mins is significantly better than the improvement in the last 5 mins. This is partly due to the fact that the SOCPs generated at the beginning are sparser, and hence faster to solve. 

In Table \ref{LPbounds}, we present the results of the iterative LP procedure used on the same instances. All LP results were obtained using a computer with 2.3 GHz speed and 32GB RAM and the solver CPLEX 12.4 \cite{cplex}. The third and fourth columns in the table contain the LP bound obtained with (\ref{CoposDDRelaxCornersDual}) and the solver time taken to do so. Columns 5 and 6 correspond to the LP iterative bounds obtained after 5 mins solving time and 10 mins solving time using the eigenvector-based column generation technique (see discussion around (\ref{iterativeLP})). The seventh and eighth columns are the standard LP bounds obtained using (\ref{LPkformulation}) and the time taken to obtain the bound. Finally, the last column gives bounds obtained by column generation using  ``triples'', as described in Section \ref{subsec:comput_experim_poly}. In this case, we take $t_1=300,000$ and $t_2=500$.

\begin{table}[H] 
	\centering
	\begin{tabular}{c|c|cc|c|c|cc|c}
		\hline
		n & p & $ DSOS_1 $ & time (s) & $DSOS_k$ (5m) & $DSOS_k$ (10m) & $LP_2 $& time (s) &  $LP_{triples}$ (10m)\\
		\hline
		150 & 0.3 & 117 & $<1$ & 110.64 &  110.26 & 75 & $<1$ & 89.00 \\
		150 & 0.8 & 46 & $<1$ & 24.65 & 19.13 & 75 & $<1$ &23.64  \\
		200 & 0.3 & 157 & $<1 $& 147.12 &  146.71 & 100 & $<1$ & 129.82\\
		200 & 0.8 & 54 & $<1$ & 39.27 & 36.01 & 100 & $<1$ & 30.43 \\
		250 & 0.3 & 194 & $<1$ & 184.89 & 184.31 & 125 & $<1$ & 168.00\\
		250 & 0.8 &  68 & $<1$ & 55.01 & 53.18 & 125 & $<1$ & 40.19\\
		300 & 0.3 & 230 & $<1$ & 222.43 & 221.56 & 150 & $<1$&  205.00 \\
		300 & 0.8 & 78 & $<1$ & 65.77 & 64.84 &  150 & $<1$& 60.00\\
		\hline
	\end{tabular}
	\caption{LP bounds obtained on the same $ER(n,p)$ graphs.}
	\label{LPbounds}
\end{table} 

We note that in this case the upper bound with triples via column generation does better for this range of $n$ than eigenvector-based column generation in the same amount of time. Furthermore, the iterative LP scheme seems to perform better in the dense regime. In particular, the first iteration does significantly better than the standard LP for $p=0.8$, even though both LPs are of similar size. This would remain true even if the 3-clique inequalities were added as in (\ref{LPkformulation}), since the optimal value of $LP_3$ is always at least $n/3$. This is because the vector $(\frac{1}{3},\ldots,\frac{1}{3})$ is feasible to the LP in (\ref{LPkformulation}) with $k=3$. Note that this LP would have order $n^3$ constraints, which is more expensive than our LP. On the contrary, for sparse regimes, the standard LP, {\ghtwo which hardly takes any time to solve,} gives better bounds than ours.

Overall, the high-level conclusion is that running the SDP is worthwhile for small sizes of the graph. As the number of nodes increases, column generation becomes valuable, providing upper bounds in a reasonable amount of time. Contrasting Tables \ref{SOCPbounds} and \ref{LPbounds}, our initial experiments seem to show that the iterative SOCP bounds are better than the ones obtained using the iterative LPs. It may be valuable to experiment with different approaches to column generation however, as the technique used to generate the new atoms seems to impact the bounds obtained.

\section{Conclusions and future research}\label{sec:future}

For many problems of discrete and polynomial optimization, there are hierarchies of SDP-based sum of squares algorithms that produce provably optimal bounds in the limit~\cite{sdprelax},~\cite{lasserre_moment}. However, these hierarchies can often be expensive computationally. In this paper, we were interested in problem sizes where even the first level of the hierarchy is too expensive, and hence we resorted to algorithms that replace the underlying SDPs with LPs or SOCPs. We built on the recent work of Ahmadi and Majumdar on DSOS and SDSOS optimization~\cite{isos_journal},~\cite{dsos_ciss14}, which serves exactly this purpose. {\ghtwo We showed that by using ideas from linear programming column generation,} the performance of their algorithms is improvable. We did this by iteratively optimizing over increasingly larger structured subsets of the cone of positive semidefinite matrices, without resorting to the more expensive rDSOS and rSDSOS hierarchies.



There is certainly a lot of room to improve our column generation algorithms. In particular, we only experimented with a few types of pricing subproblems and particular strategies for solving them. {\ghtwo The success of column generation often comes} from good ``engineering'', which fine-tunes the algorithms to the problem at hand. Developing warm-start strategies for our iterative SOCPs for example, would be a very useful problem to work on in the future.



Here is another interesting research direction, which for illustrative purposes we outline for the problem studied in Section~\ref{sec:poly.opt}; i.e., minimizing a form on the sphere. Recall that given a form $p$ of degree $2d$, we are trying to find the largest $\lambda$ such that $p(x)-\lambda (\sum_{i=1}^n x_i^2)^d$ is a sum of squares. Instead of solving this sum of squares program, we looked for the largest $\lambda$ for which we could write $p(x)-\lambda$ as a conic combination of a certain set of nonnegative polynomials. These polynomials for us were always either a single square or a sum of squares of polynomials.
%
There are polynomials, however, that are nonnegative but not representable as a sum of squares. Two classic examples~\cite{MotzkinSOS_temp},~\cite{Choi_Lam_extremalPSDforms} are the Motzkin polynomial
\[ M(x,y,z) = x^6 + y^4z^2 + y^2z^4 - 3x^2y^2z^2, \]
and the Choi-Lam polynomial
\[ CL(w,x,y,z) = w^4 + x^2y^2 + y^2z^2 + x^2z^2 - 4wxyz. \]


Either of these polynomials can be shown to be nonnegative using the arithmetic mean-geometric mean (am-gm) inequality, which states
that if $x_1, \ldots, x_k \in \R$, then 
\[ x_1, \ldots, x_k \geq 0 \Rightarrow (\sum_{i=1}^k x_i)/k \geq (\Pi_{i=1}^k x_i)^{\frac{1}{k}}. \]
For example, in the case of the Motzkin polynomial, it is clear that the monomials $x^6, y^4z^2$ and $y^2z^4$ are nonnegative for all $x,y,z \in \R$, and letting $x_1, x_2, x_3$ stand for these monomials respectively, the am-gm inequality  implies that
\[ x^6 + y^4z^2 + y^2z^4 \geq 3x^2y^2z^2 \mbox{ for all } x,y,z \in \R. \]
These polynomials are known to be extreme in the cone of nonnegative polynomials and they cannot be written as a sum of squares (sos)~\cite{Reznick}.


It would be interesting to study the separation problems associated with using such non-sos polynomials in column generation. We briefly present one separation algorithm for a family of polynomials whose nonnegativity is provable through the am-gm inequality and includes the Motzkin and Choi-Lam polynomials. This will be a relatively easy-to-solve integer program in itself, whose goal is to find a polynomial $q$ amongst this family which is to be added as our new ``nonnegative atom''.

The family of $n$-variate polynomials under consideration consists of polynomials with only $k+1$ nonzero coefficients, with $k$ of them equal to one, and one equal to $-k$. (Notice that the Motzkin and the Choi-Lam polynomials are of this form with $k$ equal to three and four respectively.) Let $m$ be the number of monomials in $p$. Given a dual vector $\mu$ of (\ref{eq:polycol}) of dimension $m$, one can check if there exists a nonnegative degree $2d$ polynomial $q(x)$ in our family such that $\mu \cdot \coef{q(x)} < 0.$ This can be done by solving the following integer program (we assume that $p(x) = \sum_{i=1}^m x^{\alpha_i}$):
\begin{eqnarray}
	\underset{c,y}{\min} && \sum_{i=1}^m  \mu_ic_i - \sum_{i=1}^m k\mu_iy_i \\\mbox{ s.t.} 
	&&\sum_{i:\alpha_i \mbox{ is even}} \alpha_i c_i = k \sum_{i=1}^m \alpha_iy_i, \nonumber \\
	&&\sum_{i=1}^m c_i = k, \nonumber\\
	&&\sum_{i=1}^m  y_i = 1, \nonumber\\
	&& c_i \in \{0,1\}, y_i \in \{0,1\}, i=1,\ldots, m, c_i=0 \mbox{ if } \alpha_i \mbox{ is not even.}\nonumber
\end{eqnarray}
Here, we have $\alpha_i\in\mathbb{N}^n$  and the variables $c_i, y_i$ form the coefficients of the polynomial $q(x) = \sum_{i=1}^m c_ix^{\alpha_i} - k\sum_{i=1}^m y_i x^{\alpha_i}$. The above integer program has $2m$ variables, but only $n+2$ constraints (not counting the integer constraints).     
If a polynomial $q(x)$ with a negative objective value is found, then one can add it as a new atom for column generation. In our specific randomly generated polynomial optimization examples,
such polynomials did not seem to help in our preliminary experiments. Nevertheless, it would be interesting to consider other instances and problem structures.

Similarly, in the column generation approach to obtaining inner approximations of the copositive cone, one need not stick to positive {\ghprime semidefinite} matrices. It is known that the $5\times 5$ ``Horn matrix''~\cite{burer2009difference} for example is extreme in the copositive cone but cannot be written as the sum of a nonnegative and a positive semidefinite matrix.
One could define a separation problem for a family of Horn-like matrices and add them in a column generation approach. Exploring such strategies is left for future research.

\section{Acknowledgments}
We are grateful to Anirudha Majumdar for insightful discussions and for his help with some of the numerical experiments in this paper.

\bibliographystyle{abbrv}
\bibliography{pablo_amirali,elib}

%
%
%
%
%
%

\end{document}